\documentclass[journal]{new-aiaa}
\usepackage[utf8]{inputenc}
\usepackage{textcomp}

\usepackage{graphicx}
\usepackage{subcaption}
\usepackage{amsmath}

\usepackage{amsthm}
\newtheorem{proposition}{Proposition}
\usepackage[version=4]{mhchem}
\usepackage{siunitx}
\usepackage{longtable,tabularx}
\usepackage{array,ragged2e,booktabs}
\newcolumntype{L}{>{\RaggedRight\arraybackslash}X}
\usepackage{xcolor}

\title{Unified Relative Orbital Element Framework for Eccentric Formation Flying}

\author{Yuji Takubo\footnote{Ph.D. Candidate, Department of Aeronautics and Astronautics, 496 Lomita Mall. AIAA Student Member (ytakubo@stanford.edu).} and Simone D'Amico\footnote{Associate Professor, Department of Aeronautics and Astronautics, 496 Lomita Mall. AIAA Fellow.}}
\affil{Stanford University, Stanford, CA, 94305}

\begin{document}

\maketitle

\begin{abstract}
Quasi-nonsingular relative orbital elements (qnsROE) are among the most widely used state representations for formation flying and rendezvous in near-circular inclined orbits. They admit closed-form perturbed state transition matrices (STMs) and a direct geometric mapping, which underpins eccentricity/inclination (E/I)-vector separation for passive safety. Existing eccentric extensions of the qnsROE resolve this geometry in the radial--tangential--normal frame, where a residual semimajor-axis difference makes the E/I separation margin phase dependent. This paper introduces eccentric ROE that reduce to the qnsROE as \(e\to0\) and whose first-order geometry is resolved in the velocity--normal--binormal (VNB) frame, where the secular drift is confined to the velocity direction. Their first-order position and velocity maps yield a closed-form, phase-independent binormal--normal separation bound with an explicit margin for semimajor-axis residuals. Closed-form Keplerian and secular-\(J_2\) STMs and a decoupled impulsive control input matrix are derived for the same elements, and the elements are recovered as the physically normalized integration constants of the Cartesian equations of relative motion in the VNB frame. Together, these results provide a unified ROE framework for formation flying in general elliptic orbits.
\end{abstract}

\section{Introduction}
\label{sec:intro}

Distributed space systems (DSS) use controlled relative motion to realize apertures, baselines, and observation geometries unavailable to a monolithic spacecraft.
Many flown and planned formation-flying missions, including GRACE~\cite{montenbruck2006grace}, TanDEM-X~\cite{krieger2007tandemx}, PRISMA~\cite{damico2012prisma}, STARLING~\cite{kruger2023starfox}, SWARM-EX~\cite{agarwal2021swarmex}, and VISORS~\cite{koenig2023visors}, operate in near-circular inclined low Earth orbit, where relative motion admits a nearly time-invariant geometric description.
Other missions and concepts, including Magnetospheric Multiscale (MMS)~\cite{burch2016mms}, HelioSwarm~\cite{plice2019helioswarm}, ESCAPADE~\cite{lillis2020escapade}, PROBA-3~\cite{llorente2013proba3}, and the miniaturized distributed occulter/telescope (mDOT)~\cite{kolmas2016mdot}, require eccentric chief orbits.
The resulting variations in orbital radius, speed, and flight-path angle complicate relative-geometry design, passive-safety analysis, perturbation modeling, and control.

Formation designs in these missions rely on reduced-order models of the relative dynamics and their bounded periodic solutions.
The Hill--Clohessy--Wiltshire (HCW) equations~\cite{clohessy1960} describe linear relative motion about a circular chief in the radial--tangential--normal (RTN) frame, and the Tschauner--Hempel equations~\cite{tschauner1965} and the Yamanaka--Ankersen (YA) state transition matrix (STM)~\cite{yamanaka2002} extend this Cartesian description to eccentric Keplerian orbits.
Bounded and invariant formations follow from constraints on the initial conditions, such as energy matching~\cite{alfriend2010} and the $J_2$-invariance conditions of Schaub and Alfriend~\cite{schaub2001j2}.
Relative orbital elements (ROE) provide a complementary description that admits analytical perturbation theories through the variational equations ~\cite{brouwer1959,vallado2001fundamentals}.
Element differences at a reference epoch form a complete set of integration constants of the linearized dynamics, and first-order element-to-Cartesian maps make this correspondence explicit~\cite{casotto1993,schaub2004,gim2003,alfriend2010,sullivan2017survey}.
Because any two such sets are related by a constant nonsingular matrix, choosing an ROE set amounts to choosing a normalization of the Cartesian modes.

Quasi-nonsingular ROE (qnsROE)~\cite{damico2010thesis} are among the most widely used state representations for formation flying in near-circular inclined orbits.
To first order, they parameterize the HCW solution with relative eccentricity and inclination vectors that describe the relative orbit geometry directly.
They admit closed-form perturbed STMs~\cite{koenig2017stm,guffanti2019srp,gaias2020analytical} and underpin eccentricity/inclination (E/I)-vector separation~\cite{damico2006ei}, the standard passive-safety rule.
For an eccentric chief, however, the qnsROE position map acquires anomaly-dependent factors and is normalized by the osculating orbit radius~\cite{willis2023thesis}.
Recovering the geometric transparency of the near-circular case has therefore motivated several eccentric element sets.
Chernick and D'Amico decoupled in-plane and out-of-plane impulsive control~\cite{chernick2018,chernick2021}, and Sullivan and D'Amico recovered part of the geometric structure while retaining the radius normalization~\cite{sullivan2017nkf}.
Delurgio and D'Amico identified eccentric ROE (EROE) with the YA integration constants, normalized the map by the chief semimajor axis, and gave a quasi-nonsingular form that reduces to the qnsROE as $e\to0$~\cite{delurgio2024}.
This set extends the qnsROE--HCW correspondence to eccentric orbits, although it resolves the geometry in the RTN frame.

E/I-vector separation must tolerate the residual semimajor-axis difference left by navigation and control errors.
For a circular chief, this residual adds a constant radial offset and a secular along-track drift, and a fixed margin for the offset makes radial--normal (RN) separation insensitive to the drift~\cite{damico2006ei}.
For an eccentric chief, the drift acquires a radial component that grows linearly in time, so no fixed RN margin suffices.
Section~\ref{sec:rtn-limitation} shows that this phase dependence originates in the RTN projection rather than in the choice of elements.
To first order, the drift acts as a time offset along the chief trajectory and therefore displaces the deputy along the chief velocity.
The velocity--normal--binormal (VNB) frame confines the drift to the velocity axis, so binormal--normal (BN) separation requires only a fixed margin for the bounded binormal offset for all $0\le e<1$.
Albert and Schaub formulated VNB relative motion in classical-element differences without defining a relative element set~\cite{albert2023}.
Leveraging the dynamics in the VNB frame (or tangential frame (TAN)), Peters and Noomen generalized E/I-vector separation to elliptic chiefs with the C-set of relative elements~\cite{peters2021cotangential}.
They also mapped the C-set linearly to the YA integration constants and to the Cartesian state in the VNB frame.
The C-set, however, resolves the relative E/I vectors along the chief apsidal line and does not reduce to the qnsROE as $e\to0$.
As a result, the guidance architecture of Ref.~\cite{buratti2026fsm}, for example, converts between the two sets.

Table~\ref{tab:roe_representation_comparison} summarizes the resulting gap: the qnsROE lineage establishes phase-robust separation only near $e=0$, and the C-set, which establishes it for $0\le e<1$, does not reduce to the qnsROE.
This paper unifies these lines of work in a single ROE set by closing the correspondence between ROE and the integration constants of the linearized relative motion in the VNB frame.
The proposed EROE are defined from orbital-element differences, and their first-order position map in the VNB frame has the harmonic form of the qnsROE map.
They are recovered independently as the physically normalized integration constants of the Cartesian VNB equations of relative motion.
The C-set is a different normalization of the same modes, and the proposed set inherits its safe-orbit geometry.
The Delurgio--D'Amico EROE map to the proposed set by a regular transformation that reduces to the identity at $e=0$.
The proposed set thus plays for the VNB solution the role that the qnsROE play for the HCW solution and the Delurgio--D'Amico EROE for the YA solution.

The contributions of this paper are threefold.
First, through their first-order position map in the VNB frame, the proposed EROE yield a phase-robust E/I-vector separation bound in the BN plane for $0\le e<1$.
As $e\to0$, the EROE reduce to the qnsROE and the bound recovers near-circular E/I-vector separation.
Nonlinear simulations with a semimajor-axis residual show that formations sized by this bound remain outside the keep-out zone, whereas formations sized with the same margin by the RN-plane separation condition of Ref.~\cite{delurgio2024} can enter it.
Second, closed-form Keplerian and secular mean-$J_2$ STMs and an impulsive control input matrix are derived for the proposed EROE.
The input matrix decouples in-plane and out-of-plane control for all $0\le e<1$, and each result reduces to its qnsROE counterpart at $e=0$.
Third, the Cartesian VNB equations of relative motion are solved directly, yielding a closed-form fundamental matrix and its analytical inverse.
To the best of the authors' knowledge, no YA-type solution has previously been derived directly in the VNB frame.
Normalizing the modes of this solution by harmonic amplitude, physical time offset, and secular shear rate makes its integration constants the proposed EROE, so the EROE-to-Cartesian map and its inverse require no separate transformation.

The remainder of the paper is organized as follows.
Section~\ref{sec:rtn-limitation} shows that the loss of phase-robust geometry originates in the RTN projection and is removed by the VNB projection.
Section~\ref{sec:new-eroes} defines the proposed EROE, relates them to existing element sets, and develops their geometry in the VNB frame and their passive-safety conditions.
Section~\ref{sec:stm-control} derives the STMs and the control input matrix.
Section~\ref{sec:cartesian-derivation} derives the Cartesian VNB fundamental matrix directly and identifies its integration constants with the EROE.
Section~\ref{sec:numerical-results} evaluates the passive-safety benefit under a residual semimajor-axis difference and the finite-horizon accuracy of the STMs, and Sec.~\ref{sec:conclusions} concludes.

\begin{table*}[!tbp]
\centering
\caption{Comparison of representative quasi-nonsingular and eccentric relative orbital element sets.}
\label{tab:roe_representation_comparison}
\fontsize{8.5}{9.3}\selectfont
\setlength{\tabcolsep}{3pt}
\renewcommand{\arraystretch}{1.02}
\setlength{\abovedisplayskip}{1pt}
\setlength{\belowdisplayskip}{1pt}
\setlength{\abovedisplayshortskip}{1pt}
\setlength{\belowdisplayshortskip}{1pt}
\setlength{\jot}{0pt}
\begin{tabularx}{\textwidth}{@{}>{\RaggedRight\arraybackslash}p{0.11\textwidth} X >{\Centering\arraybackslash}p{0.07\textwidth} >{\Centering\arraybackslash}p{0.08\textwidth} >{\Centering\arraybackslash}p{0.09\textwidth} >{\Centering\arraybackslash}p{0.09\textwidth} >{\Centering\arraybackslash}p{0.10\textwidth}@{}}
\toprule
\textbf{Element set}
& \textbf{Definition (six elements)}
& \textbf{Regular at $e=0$}
& \textbf{Reduces to qnsROE as $e\to0$}
& \textbf{Geometric-map normalization}
& \textbf{In-/out-of-plane decoupled control}
& \textbf{Phase-robust E/I separation for $\delta a\ne0$} \\
\midrule

qnsROE~\cite{damico2010thesis}
&
$\begin{aligned}
\delta a&=\Delta a/a,\\
\delta\lambda&=\Delta u+\cos i\,\Delta\Omega,\\
\delta e_x&=\Delta e_x=e_d\cos\omega_d-e\cos\omega,\\
\delta e_y&=\Delta e_y=e_d\sin\omega_d-e\sin\omega,\\
\delta i_x&=\Delta i,\\
\delta i_y&=\sin i\,\Delta\Omega.
\end{aligned}$
& $\checkmark$
& identity
& $a$-normalized
& $e\approx0$
& $e\approx0$
\\
\midrule

Sullivan--D'Amico~\cite{sullivan2017nkf}
&
With $\xi=(1+e^2/2)/\eta^3$,\newline
$\begin{aligned}
\delta a&=\Delta a/a,\\
\delta\lambda&=\xi\, (\Delta u +\cos i\,\Delta\Omega)\\
&\quad +(1-\xi)\Bigl(\frac{-\sin\omega\,\Delta e_x+\cos\omega\,\Delta e_y}{e}\\
&\qquad +\cot i\, \sin i\,\Delta\Omega \Bigr),\\
\delta e_x&=\frac{\cos\omega\,\Delta e_x+\sin\omega\,\Delta e_y}{\eta^2},\\
\delta e_y&=\frac{1}{\eta^3}\bigl[-\sin\omega\,\Delta e_x+\cos\omega\,\Delta e_y\\
&\quad-e\, (\Delta u+\cos i\,\Delta\Omega)
+e\cot i\,\sin i\,\Delta\Omega\bigr],\\
\delta i_x&=\Delta i,\\
\delta i_y&=\sin i\,\Delta\Omega.
\end{aligned}$
& $\times$
& $\times$
& $r$-normalized
& $0<e<1$
& ---
\\
\midrule

Chernick--D'Amico~\cite{chernick2018}
&
$\begin{aligned}
\delta a&=\Delta a/a,\\
\delta\lambda&=\Delta M+\eta\bigl(\Delta\omega+\cos i\,\Delta\Omega\bigr),\\
\delta e_x&=\Delta e,\\
\delta e_y&=\Delta\omega+\cos i\,\Delta\Omega,\\
\delta i_x&=\Delta i,\\
\delta i_y&=\sin i\,\Delta\Omega.
\end{aligned}$
& $\times$
& $\times$
& $\times$
& $0<e<1$
& ---
\\
\midrule

Peters--Noomen\newline C-set~\cite{peters2021cotangential}
&
$\begin{aligned}
C_1&=\eta^2\Delta a-2ae\,\Delta e,\\
C_2&=\eta^2e\,\Delta a-a\chi\,\Delta e,\\
C_3&=-a\eta^2e\,(\Delta\omega+\cos i\,\Delta\Omega),\\
C_4&=a\bigl(\Delta\omega+\cos i\,\Delta\Omega+\eta^{-1}\Delta M\bigr),\\
C_5&=-a\eta^2(\cos\omega\,\Delta i+\sin\omega\sin i\,\Delta\Omega),\\
C_6&=a\eta^2(\sin\omega\,\Delta i-\cos\omega\sin i\,\Delta\Omega).
\end{aligned}$
& $\checkmark^{\mathrm{b}}$
& $\times$
& $a$-normalized$^{\mathrm{c}}$
& $0\le e<1$
& $0\le e<1$
\\
\midrule

Delurgio--D'Amico~\cite{delurgio2024}
&
$\begin{aligned}
\delta a&=\eta^2\Delta a/a,\\
\delta\lambda&=\frac{\Delta M}{\eta}
+\eta^2\bigl(\Delta\omega+\cos i\,\Delta\Omega\bigr),\\
\delta e_x&=\Delta e\cos\omega+\frac{e}{\eta}\Delta M\sin\omega,\\
\delta e_y&=\Delta e\sin\omega-\frac{e}{\eta}\Delta M\cos\omega,\\
\delta i_x&=\eta^2\Delta i,\\
\delta i_y&=\eta^2\sin i\,\Delta\Omega.
\end{aligned}$
& $\checkmark^{\mathrm{a}}$
& $\checkmark$
& $a$-normalized
& $0\le e<1$
& $e\approx0$
\\
\midrule

\textbf{This work}
&
With $E_\parallel=\chi\Delta e-\eta^2e\,\Delta a/a$ and $E_\perp=\eta^2e\,(\Delta\omega+\cos i\,\Delta\Omega)$,\newline
$\begin{aligned}
\delta a&=\Delta a/a,\\
\delta\lambda&=\Delta M+\frac{\eta^3}{\chi}
\bigl(\Delta\omega+\cos i\,\Delta\Omega\bigr),\\
\delta e_x&=E_\parallel\cos\omega-E_\perp\sin\omega,\\
\delta e_y&=E_\parallel\sin\omega+E_\perp\cos\omega,\\
\delta i_x&=\eta^2\Delta i,\\
\delta i_y&=\eta^2\sin i\,\Delta\Omega.
\end{aligned}$
& $\checkmark^{\mathrm{a}}$
& $\checkmark$
& $a$-normalized
& $0\le e<1$
& $0\le e<1$
\\
\bottomrule
\end{tabularx}

\smallskip
\begin{minipage}{\textwidth}
\footnotesize
Notes: $\eta=\sqrt{1-e^2}$ and $\chi=1+e^2$, and $\Delta$ denotes a deputy-minus-chief element difference.
$^{\mathrm{a}}$Regular in the quasi-nonsingular form (Ref.~\cite{delurgio2024} and Appendix~\ref{app:regularization}).
$^{\mathrm{b}}$Through the maps to Cartesian coordinates and to the YA constants~\cite{peters2021cotangential}.
$^{\mathrm{c}}$After division by $a$, the N and B rows of the position map share the scaling of this work, whereas the V row carries an additional $C_3$ term (cf. Sec.~\ref{sec:equivalence}).
\end{minipage}
\end{table*}

\section{Frame Dependence of Phase-Robust Geometry}
\label{sec:rtn-limitation}

This section shows, using the existing YA solution, that for an eccentric chief the loss of phase robustness originates in the RTN projection rather than in the choice of relative elements.

Let $\boldsymbol r_c$ and $\boldsymbol v_c$ denote the chief inertial position and velocity.
The chief RTN and VNB frames are defined by
\begin{equation}
\hat R=\frac{\boldsymbol r_c}{\|\boldsymbol r_c\|},\qquad
\hat N=\frac{\boldsymbol r_c\times\boldsymbol v_c}{\|\boldsymbol r_c\times\boldsymbol v_c\|},\qquad
\hat T=\hat N\times\hat R,\qquad
\hat V=\frac{\boldsymbol v_c}{\|\boldsymbol v_c\|},\qquad
\hat B=\hat V\times\hat N.
\label{eq:frames}
\end{equation}
The axis $\hat N$ is shared by the two frames, and $\hat B$ lies in the orbital plane, perpendicular to the chief velocity.
Stacking the position components and their derivatives in each rotating frame gives the relative states
\begin{equation}
\boldsymbol x_{\mathrm{RTN}}=\begin{bmatrix}\rho_R&\rho_T&\rho_N&\dot\rho_R&\dot\rho_T&\dot\rho_N\end{bmatrix}^{\top},
\qquad
\boldsymbol x_{\mathrm{VNB}}=\begin{bmatrix}\rho_V&\rho_N&\rho_B&\dot\rho_V&\dot\rho_N&\dot\rho_B\end{bmatrix}^{\top}.
\label{eq:relative-states}
\end{equation}
The TAN frame of Ref.~\cite{peters2021cotangential} has axes $\hat V$, $-\hat N$, and $-\hat B$.
The chief classical elements are the semimajor axis $a$, eccentricity $e$, inclination $i$, right ascension of the ascending node $\Omega$, argument of periapsis $\omega$, and mean anomaly $M$.
The chief true anomaly is denoted by $\nu$, the gravitational parameter by $\mu$, and the deputy-minus-chief difference operator by $\Delta$.
The chief scalars used throughout the paper are
\begin{equation}
\begin{gathered}
n=\sqrt{\mu/a^3},\qquad
\eta=\sqrt{1-e^2},\qquad
\chi=1+e^2=2-\eta^2,\\
u=M+\omega,\qquad
\theta=\omega+\nu,\qquad
e_x=e\cos\omega,\qquad
e_y=e\sin\omega,\\
k=1+e\cos\nu=1+e_x\cos\theta+e_y\sin\theta,\qquad
S=e\sin\nu=e_x\sin\theta-e_y\cos\theta,\\
r_c=\|\boldsymbol r_c\|=\frac{a\eta^2}{k},\qquad
v_c=\|\boldsymbol v_c\|,\\
q=\sqrt{k^2+S^2}=\sqrt{2k-\eta^2}=\frac{\eta v_c}{na},\\
\dot\theta=\frac{nk^2}{\eta^3},\qquad
\omega_V=\frac{k}{q^2}\dot\theta=\frac{nk^3}{\eta^3q^2},\qquad
\tau=t-t_0,
\end{gathered}
\label{eq:chief-notation}
\end{equation}
where $u$ is the mean argument of latitude, $\theta$ is the true argument of latitude, and $\boldsymbol e=[e_x,e_y]^{\top}$ is the chief eccentricity vector.
The chief quasi-nonsingular (QNS) orbital elements are thus $\boldsymbol\alpha_c^{(\mathrm{qns})}=[a,u,e_x,e_y,i,\Omega]^{\top}$.
The qnsROE and the Delurgio--D'Amico EROE of Table~\ref{tab:roe_representation_comparison} are denoted $\delta\boldsymbol\alpha^{(\mathrm{qns})} = [\delta a^{(\mathrm{qns})}, \delta\lambda^{(\mathrm{qns})}, \delta e_x^{(\mathrm{qns})}, \delta e_y^{(\mathrm{qns})}, \delta i_x^{(\mathrm{qns})}, \delta i_y^{(\mathrm{qns})}]^{\top}$ and $\delta\boldsymbol\alpha^{(D)} = [\delta a^{(D)}, \delta\lambda^{(D)}, \delta e_x^{(D)}, \delta e_y^{(D)}, \delta i_x^{(D)}, \delta i_y^{(D)}]^{\top}$, and symbols without a superscript denote the proposed EROE of Sec.~\ref{sec:new-eroes}.
In every set, the relative eccentricity and inclination vectors are $\delta\boldsymbol e=[\delta e_x,\delta e_y]^{\top}$ and $\delta\boldsymbol i=[\delta i_x,\delta i_y]^{\top}$, carrying the superscript of the set.
The parameter $k$ fixes the chief radius $r_c$, and $S$ and $q$ carry the chief flight-path geometry, with $k/q = \cos \gamma$ and $S/q = \sin\gamma$ where $\gamma$ is the chief flight-path angle.
Both frames rotate about $\hat N$, the RTN frame at $\dot\theta$ and the VNB frame at $\omega_V=\dot\theta-\dot\gamma$, and the inertial relative velocity follows from Eq.~\eqref{eq:relative-states} by adding $\boldsymbol\omega\times\boldsymbol\rho$, with $\boldsymbol\omega=\dot\theta\hat N$ or $\omega_V\hat N$.

The YA solution expresses first-order Cartesian relative motion about an eccentric Keplerian chief as $\boldsymbol x_{\mathrm{RTN}}(t)=\boldsymbol F_{\mathrm{RTN}}(t,t_0)\boldsymbol c_{\mathrm{RTN}}$, where $\boldsymbol F_{\mathrm{RTN}}$ is a fundamental matrix and $\boldsymbol c_{\mathrm{RTN}}$ contains six integration constants~\cite{yamanaka2002,guffanti2022optimal}.
Under the normalization, these constants can be identified with the Delurgio--D'Amico EROE at a reference epoch, $\boldsymbol c_{\mathrm{RTN}}=\delta\boldsymbol\alpha^{(D)}(t_0)$~\cite{delurgio2024}, so the columns of $\boldsymbol F_{\mathrm{RTN}}$ are the Cartesian responses of the individual elements.
Separating the bounded from the time-growing contributions of the position block gives~\cite{guffanti2022optimal}
\begin{equation}
\frac{\boldsymbol{\rho}_{\mathrm{RTN}}(t)}{a}
=
\begin{bmatrix}
\dfrac{\delta a^{(D)}}{k}
-\delta e_x^{(D)}\cos\theta-\delta e_y^{(D)}\sin\theta\\[8pt]
\dfrac{\delta\lambda^{(D)}}{k}
+\left(\dfrac{1}{k}+1\right)
\left(\delta e_x^{(D)}\sin\theta-\delta e_y^{(D)}\cos\theta\right)\\[8pt]
\dfrac{1}{k}\left(\delta i_x^{(D)}\sin\theta
-\delta i_y^{(D)}\cos\theta\right)
\end{bmatrix}
\underbrace{{}-
\frac{3n\tau}{2\eta^3}
\begin{bmatrix}
e\sin\nu\\
k\\
0
\end{bmatrix}\delta a^{(D)}}_{\boldsymbol\rho_{\mathrm{RTN},\mathrm{sec}}(t)/a},
\label{eq:rtn-response}
\end{equation}
where every Delurgio--D'Amico EROE on the right-hand side is evaluated at $t_0$, and $\boldsymbol\rho_{\mathrm{RTN},\mathrm{sec}}$ is the secular phase contribution.
For a circular chief, a semimajor-axis residual gives $\Delta\rho_R=\Delta a_{\mathrm{err}}$ and $\Delta\rho_{T,\mathrm{sec}}=-\tfrac32n\tau\,\Delta a_{\mathrm{err}}$.
The RN clearance must therefore allow for the constant radial offset even though the secular contribution is purely along-track.
For $e>0$, the secular radial component is nonzero away from the apsides and its amplitude grows linearly with time.
An E/I-vector separation condition formulated in the RN projection~\cite{delurgio2024} does not, in general, provide a time-invariant lower bound when $\delta a^{(D)}\ne0$.

Rotating the same solution into the VNB frame changes this conclusion.
With $S$ and $q$ of Eq.~\eqref{eq:chief-notation}, the rotation from RTN to VNB components is
\begin{equation}
\boldsymbol{\rho}_{\mathrm{VNB}}
=
\boldsymbol{R}_{\mathrm{VR}}\,\boldsymbol{\rho}_{\mathrm{RTN}},
\qquad
\boldsymbol{R}_{\mathrm{VR}}
=
\frac{1}{q}
\begin{bmatrix}
S & k & 0\\
0 & 0 & q\\
k & -S & 0
\end{bmatrix}.
\label{eq:rtn-to-vnb}
\end{equation}
The rows of $\boldsymbol R_{\mathrm{VR}}$ are $\hat V$, $\hat N$, and $\hat B$ resolved in RTN, and the secular contribution of Eq.~\eqref{eq:rtn-response} becomes
\begin{equation}
\boldsymbol\rho_{\mathrm{VNB},\mathrm{sec}}
=\boldsymbol R_{\mathrm{VR}}\,\boldsymbol\rho_{\mathrm{RTN},\mathrm{sec}}
=-\frac{3n\tau}{2\eta^3}\,aq\,\delta a^{(D)}
\begin{bmatrix}1&0&0\end{bmatrix}^{\top}.
\label{eq:vnb-secular}
\end{equation}
The accumulated phase enters neither $B$ nor $N$, so the BN projection remains bounded and periodic under Keplerian dynamics even when $\delta a^{(D)}\ne0$.
The bounded semimajor-axis contribution still shifts the binormal geometry and must be included in the separation margin.
Peters and Noomen observed the same independence in C-set coordinates~\cite{peters2021cotangential}, and Eq.~\eqref{eq:vnb-secular} shows that it follows from the YA solution by a rotation alone, independently of the element set.

\section{Proposed Eccentric Relative Orbital Elements}
\label{sec:new-eroes}

This section defines the proposed EROE, derives their first-order position and velocity maps in the VNB frame, relates them to existing element sets, and develops the resulting passive-safety conditions in the BN plane.

\subsection{Definition}
\label{sec:definition}

The proposed EROE are defined as
\begin{equation}
\delta\boldsymbol\alpha=
\begin{bmatrix}
\delta a \\
\delta\lambda \\
\delta e_x \\
\delta e_y \\
\delta i_x \\
\delta i_y
\end{bmatrix}
 =
\begin{bmatrix}
\dfrac{\Delta a}{a} \\
\Delta M+\dfrac{\eta^3}{\chi} (\Delta\omega+\cos i\,\Delta\Omega) \\
E_\parallel\cos\omega-E_\perp\sin\omega \\
E_\parallel\sin\omega+E_\perp\cos\omega\\
\eta^2\Delta i \\
\eta^2\sin i\,\Delta\Omega
\end{bmatrix}.
\label{eq:roe-definition}
\end{equation}
The in-plane shape components are
\begin{equation}
E_\parallel=\chi\Delta e-\eta^2e\,\dfrac{\Delta a}{a}, \qquad
E_\perp=\eta^2e\,(\Delta\omega+\cos i\,\Delta\Omega).
\label{eq:classical-definitions}
\end{equation}
Relative to the qnsROE, the definition retains $\delta a$, scales the inclination pair by $\eta^2$, and reweights the periapsis contribution to the longitude by $\eta^3/\chi$.
It replaces the eccentricity-vector difference with $(E_\parallel,E_\perp)$, rotated from the periapsis direction into the axes of $\boldsymbol e$, and the parallel component includes an energy correction proportional to $\Delta a$.
Every weight reduces to unity, and the energy correction vanishes, as $e\to0$.
Section~\ref{sec:vnb-geometry} derives these weights from the first-order VNB position map in classical element differences, and Sec.~\ref{sec:cartesian-derivation} shows independently that they coincide with a physical normalization of the integration constants of the Cartesian VNB equations of motion.
Other normalizations, such as that of the C-set, are related to Eq.~\eqref{eq:roe-definition} by the constant maps of Sec.~\ref{sec:equivalence}.

The classical-element form of Eq.~\eqref{eq:roe-definition} is singular at $e=0$, where $\omega$ is undefined.
Appendix~\ref{app:regularization} rewrites $\delta\lambda$ and $\delta\boldsymbol e$ in node-corrected eccentricity differences, giving a forward and inverse transformation that is regular for $0\le e<1$.

\subsection{Geometry in the VNB Frame}
\label{sec:vnb-geometry}

This subsection derives the VNB position and velocity maps of the proposed EROE, Eqs.~\eqref{eq:position} and~\eqref{eq:velocity}.
First, the first-order VNB position in classical element differences is written as~\cite{albert2023}
\begin{equation}
\begin{aligned}
\frac{\rho_V}{a}&=\frac q\eta\Delta M+\frac{\eta^2S}{kq}\frac{\Delta a}{a}
+\frac{2\sin\nu}{q}\Delta e+\frac{\eta^2}{q}(\Delta\omega+\cos i\,\Delta\Omega),\\
\frac{\rho_N}{a}&=\frac{\eta^2}{k}(\sin\theta\,\Delta i-\cos\theta\sin i\,\Delta\Omega), \\
\frac{\rho_B}{a}&=\frac{\eta^2}{q}\frac{\Delta a}{a}
-\frac{2e+\chi\cos\nu}{kq}\Delta e-\frac{\eta^2S}{kq}(\Delta\omega+\cos i\,\Delta\Omega).
\end{aligned}
\label{eq:vnb-classical-map}
\end{equation}
Each component is split into a center term and a scaled first harmonic of the anomaly, and the harmonic coefficients define the elements.
Scaling the binormal component by $kq$ gives the decomposition into a constant offset and a harmonic as
\begin{equation}
\frac{kq\,\rho_B}{a}=\frac{\Delta p}{a}-E_\parallel\cos\nu-E_\perp\sin\nu,
\qquad
\delta p := \frac{\Delta p}{a}=\eta^2\frac{\Delta a}{a}-2e\,\Delta e
=\frac{\eta^4\delta a-2(e_x\delta e_x+e_y\delta e_y)}{\chi},
\label{eq:semilatus-offset}
\end{equation}
where $p=a(1-e^2)$.
The last form follows from Eq.~\eqref{eq:roe-definition}, and the harmonic coefficients are the shape components of Eq.~\eqref{eq:classical-definitions}.
With the true argument of latitude $\theta=\omega+\nu$ and the EROE of Eq.~\eqref{eq:roe-definition}, Eq.~\eqref{eq:vnb-classical-map} becomes
\begin{equation}
\begin{aligned}
\frac{\rho_V}{a}&=\frac q\eta\delta\lambda+\mathcal C\,\delta a
+\frac{2}{\chi q}(\delta e_x\sin\theta-\delta e_y\cos\theta)
&&=\frac q\eta\delta\lambda+\mathcal C\,\delta a+\frac{2\,\delta e}{\chi q}\sin(\theta-\phi_e),\\
\frac{\rho_N}{a}&=\frac{1}{k} (\delta i_x\sin\theta-\delta i_y\cos\theta)
&&=\frac{\delta i}{k}\sin(\theta-\phi_i),\\
\frac{\rho_B}{a}&=\frac{1}{kq} ( \delta p-\delta e_x\cos\theta-\delta e_y\sin\theta )
&&=\frac{1}{kq} (\delta p-\delta e\cos(\theta-\phi_e)),\\
\mathcal C&=\frac{\eta^2S}{q}\left(\frac1k+\frac{2}{\chi}\right),
\end{aligned}
\label{eq:position}
\end{equation}
where $\delta e=\|\delta\boldsymbol e\|$, $\delta i=\|\delta\boldsymbol i\|$, $\phi_e=\operatorname{atan2}(\delta e_y,\delta e_x)$, and $\phi_i=\operatorname{atan2}(\delta i_y,\delta i_x)$.

Differentiating Eq.~\eqref{eq:position} along Keplerian motion, with $\dot\theta$ of Eq.~\eqref{eq:chief-notation}, $dk/d\theta=-S$, $dS/d\theta=k-1$, $dq/d\theta=-S/q$, and $\dot{\delta\lambda}=-\tfrac32n\delta a$ (cf. Sec.~\ref{sec:roe-stm}), and eliminating $\delta\lambda$ and $\delta p$ through the position components gives the VNB-frame velocity mapping as
\begin{equation}
\begin{aligned}
\frac{\dot\rho_V}{a\dot\theta}&=-\frac{S}{q^2}\frac{\rho_V}{a}-\frac{2k}{q^2}\frac{\rho_B}{a}+\frac{\eta^4}{2k^2q}\delta a,\\
\frac{\dot\rho_N}{a\dot\theta}&=\frac{S}{k}\frac{\rho_N}{a}+\frac{1}{k}(\delta i_x\cos\theta+\delta i_y\sin\theta),\\
\frac{\dot\rho_B}{a\dot\theta}&=S\left(\frac1k+\frac1{q^2}\right)\frac{\rho_B}{a}+\frac{1}{kq}(\delta e_x\sin\theta-\delta e_y\cos\theta).
\end{aligned}
\label{eq:velocity}
\end{equation}
Together, Eqs.~\eqref{eq:position} and~\eqref{eq:velocity} are the componentwise form of the instantaneous six-state map $\boldsymbol x_{\mathrm{VNB}}(t)=\boldsymbol\Psi_{\mathrm{VNB}}(t)\,\delta\boldsymbol\alpha(t)$, whose matrix is recovered independently in Sec.~\ref{sec:cartesian-identification}.

Based on Eq.~\eqref{eq:position}, Fig.~\ref{fig:VNB_projection} illustrates the resulting geometry of the osculating ellipse in the VNB frame.
The BN geometry is displaced from the chief by the relative semilatus rectum $\delta p$ of Eq.~\eqref{eq:semilatus-offset}, which reduces to $\delta a$, the radial offset of near-circular relative motion, as $e\to0$.
The osculating ratio of the V and B shape amplitudes is $2k/\chi$, which reduces to the familiar 2:1 ellipse as $e\to0$ and $\delta a\to0$.
However, because $k$ and $q$ vary with anomaly, the projected VB and BN loci are not ellipses for $e>0$.

\begin{figure}[hbt!]
\centering
\includegraphics[width=0.8\textwidth]{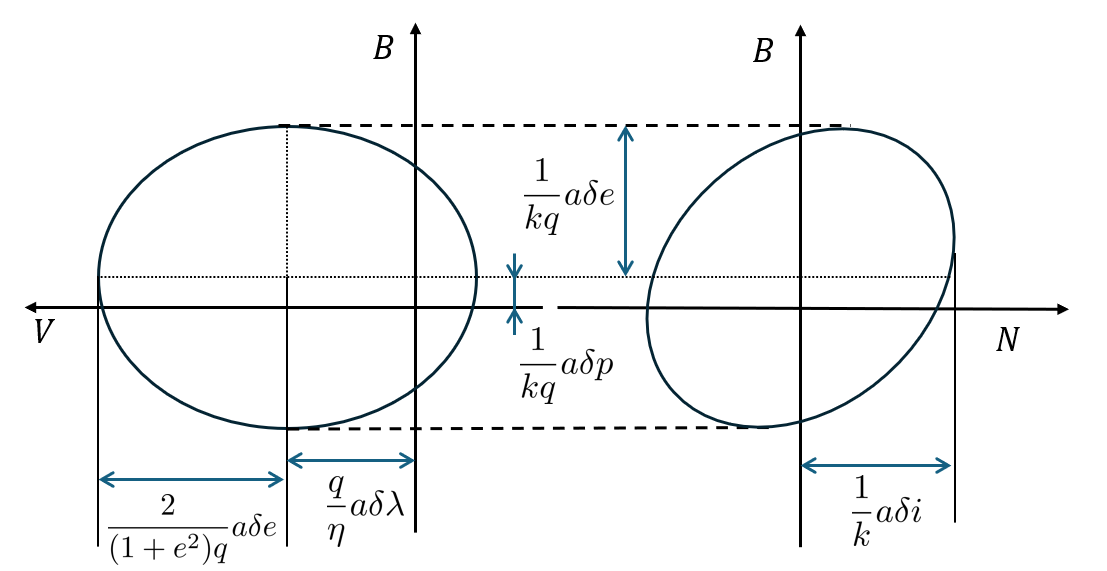}
\caption{Osculating relative motion geometry in the chief VNB frame for $\delta a=0$, projected onto the VB and BN planes.}
\label{fig:VNB_projection}
\end{figure}

As an example, Fig.~\ref{fig:frame-drift} demonstrates this effect using a deputy that differs from the chief of Sec.~\ref{sec:passive-safety-case-study} only by a \SI{100}{\metre} semimajor-axis offset at $e=0.4$.
The chief and deputy are propagated with nonlinear two-body dynamics for three chief orbits.
Figures~\ref{fig:frame-drift-rtn} and~\ref{fig:frame-drift-vnb} resolve the same trajectory in the two frames, so any difference between them is due solely to the projection.
The purple curves are the first-order phase/energy centers, that is, the nonharmonic terms of Eq.~\eqref{eq:rtn-response} in RTN frame and of Eq.~\eqref{eq:position} in VNB frame.
The nonlinear trajectories agree with the corresponding first-order maps to within \SI{1}{\metre}.
For a pure semimajor-axis offset, every Delurgio--D'Amico EROE except $\delta a^{(D)}$ vanishes, so the RTN trajectory coincides with its center, and the growing radial loops in Fig.~\ref{fig:frame-drift-rtn} are the secular radial term of Eq.~\eqref{eq:rtn-response} itself.
In the VNB-frame projection of Fig.~\ref{fig:frame-drift-vnb}, the same drift moves the center only along $V$.
The binormal offset $a\delta p/(kq)$ of the center stays bounded, and the binormal coordinate of the trajectory remains between \SI{59}{\metre} and \SI{140}{\metre}, as given by the offset $\eta^2\Delta a/q$ derived in Sec.~\ref{sec:passive-safety}.
Figure~\ref{fig:frame-drift-centered} repeats the VNB case with matched semilatus recta and is discussed there.

\begin{figure}[hbt!]
\centering
\begin{subfigure}[t]{0.32\textwidth}
\centering
\includegraphics[width=\linewidth]{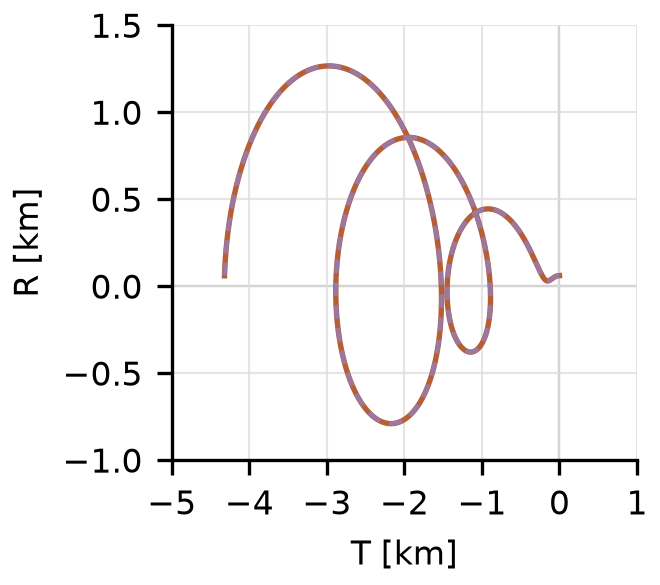}
\caption{RTN, pure $\Delta a$.}
\label{fig:frame-drift-rtn}
\end{subfigure}\hfill
\begin{subfigure}[t]{0.32\textwidth}
\centering
\includegraphics[width=\linewidth]{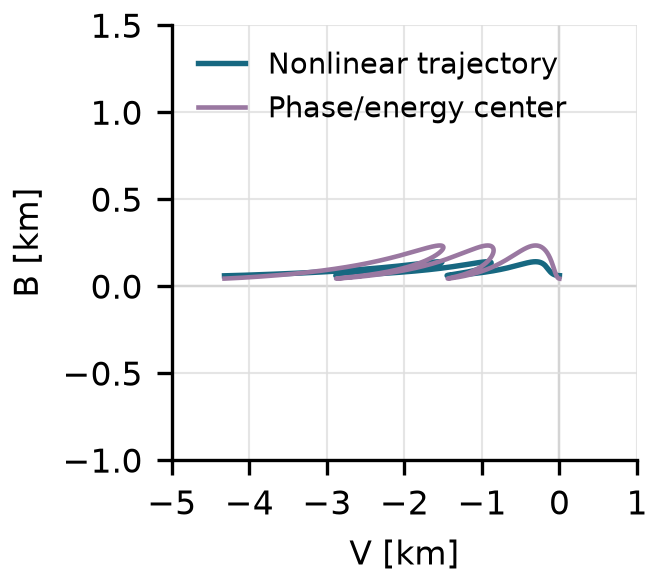}
\caption{VNB, pure $\Delta a$ ($\delta p\ne0$).}
\label{fig:frame-drift-vnb}
\end{subfigure}\hfill
\begin{subfigure}[t]{0.32\textwidth}
\centering
\includegraphics[width=\linewidth]{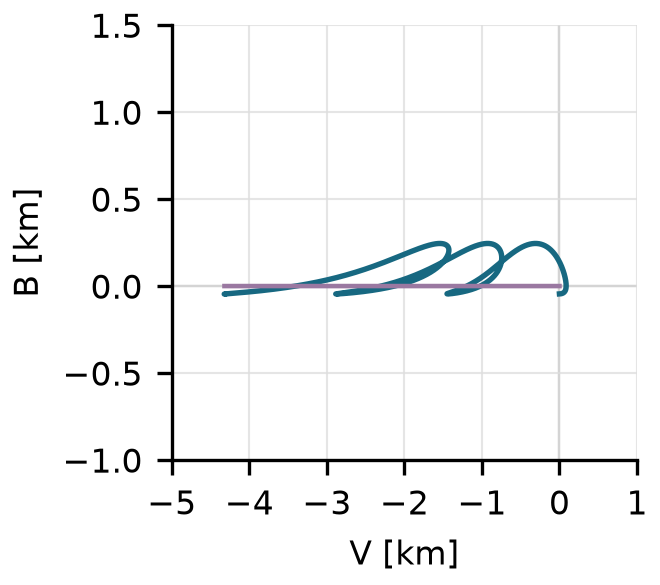}
\caption{VNB, $\delta p=0$.}
\label{fig:frame-drift-centered}
\end{subfigure}
\caption{Relative motion under a \SI{100}{\metre} semimajor-axis offset at $e=0.4$: (a) RTN projection of the pure-$\Delta a$ trajectory, (b) the same trajectory in VNB, and (c) the VNB trajectory after matching the semilatus recta. Purple curves denote the corresponding first-order phase/energy centers.}
\label{fig:frame-drift}
\end{figure}

\subsection{Relation to Existing Element Sets}
\label{sec:equivalence}

The proposed elements are related to the Delurgio--D'Amico EROE, and through them to the YA integration constants, and to the C-set by maps that are constant under Keplerian motion.
Let $\boldsymbol e^{\perp}=[-e_y,e_x]^{\top}$, and $\boldsymbol R(\cdot)$ the $2\times2$ rotation matrix.
\begin{proposition}[Relation to the Delurgio--D'Amico EROE and the C-set]
\label{prop:equivalence}
At every epoch of Keplerian relative motion about a non-equatorial chief, the following hold.
\begin{enumerate}
\item For $0\le e<1$, the proposed EROE and the Delurgio--D'Amico EROE of Table~\ref{tab:roe_representation_comparison} are related by
\begin{equation}
\begin{aligned}
\delta a&=\frac{\delta a^{(D)}}{\eta^{2}},\qquad
\delta\lambda=\frac{\eta}{\chi}\left(\delta\lambda^{(D)}-(\boldsymbol e^{\perp})^{\top}\delta\boldsymbol e^{(D)}\right),\qquad
\delta\boldsymbol i=\delta\boldsymbol i^{(D)},\\
\delta\boldsymbol e&=\left(\boldsymbol I_2+\boldsymbol e\boldsymbol e^{\top}\right)\delta\boldsymbol e^{(D)}
-\delta a^{(D)}\,\boldsymbol e+\delta\lambda^{(D)}\,\boldsymbol e^{\perp}.
\end{aligned}
\label{eq:map-delurgio}
\end{equation}
The map depends only on the chief eccentricity vector, is smooth for $0\le e<1$, and reduces to the identity at $e=0$.
\item For $0<e<1$, the proposed EROE and the C-set of Table~\ref{tab:roe_representation_comparison} are related by
\begin{equation}
\begin{gathered}
a\,\delta p=C_1,\qquad
a\eta^4\delta a=\chi C_1-2eC_2,\qquad
a\,\delta\lambda=\eta C_4+\frac{2e}{\eta\chi}C_3,\\
a\,\delta\boldsymbol e=-\boldsymbol R(\omega)
\begin{bmatrix}C_2\\ C_3\end{bmatrix},\qquad
a\,\delta\boldsymbol i=-\boldsymbol R(\omega)
\begin{bmatrix}C_5\\ C_6\end{bmatrix}.
\end{gathered}
\label{eq:map-cset}
\end{equation}
The map depends on the chief argument of periapsis and therefore has no limit as $e\to0$.
\end{enumerate}
\end{proposition}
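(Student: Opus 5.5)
Both parts are purely algebraic identities between element differences at a common epoch. I will verify them directly from the classical-element definitions in Table~\ref{tab:roe_representation_comparison} and Eq.~\eqref{eq:roe-definition}, working first for $0<e<1$ where $\omega$ is defined. Since both sides of each relation are linear in the six classical differences $(\Delta a,\Delta M,\Delta e,\Delta\omega,\Delta i,\Delta\Omega)$, each identity reduces to matching coefficients. Write $\Delta\tilde\omega=\Delta\omega+\cos i\,\Delta\Omega$. The inclination relations are immediate: $\delta\boldsymbol i=\delta\boldsymbol i^{(D)}$ holds by inspection, and $-\boldsymbol R(\omega)[C_5,C_6]^\top=a\eta^2[\Delta i,\sin i\,\Delta\Omega]^\top$ follows from $\boldsymbol R(\omega)\boldsymbol R(-\omega)=\boldsymbol I_2$, taking $\boldsymbol R$ counterclockwise. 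The semimajor-axis relation $\delta a=\delta a^{(D)}/\eta^2$ is also trivial.

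For part~1, I would first resolve $\delta\boldsymbol e^{(D)}$ in the periapsis frame. Doing so gives $\boldsymbol R(-\omega)\delta\boldsymbol e^{(D)}=[\Delta e,\,-e\Delta M/\eta]^\top$, and similarly $\boldsymbol R(-\omega)\delta\boldsymbol e=[E_\parallel,E_\perp]^\top$. In this frame $\boldsymbol e=[e,0]^\top$, $\boldsymbol e^\perp=[0,e]^\top$, and $\boldsymbol I_2+\boldsymbol e\boldsymbol e^\top=\operatorname{diag}(\chi,1)$. The right-hand side of the $\delta\boldsymbol e$ relation therefore has parallel component $\chi\Delta e-e\eta^2\Delta a/a$, which equals $E_\parallel$. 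Its perpendicular component is $-e\Delta M/\eta+e(\Delta M/\eta+\eta^2\Delta\tilde\omega)=\eta^2e\Delta\tilde\omega$, which equals $E_\perp$. For $\delta\lambda$, $(\boldsymbol e^\perp)^\top\delta\boldsymbol e^{(D)}=-e^2\Delta M/\eta$, so $\delta\lambda^{(D)}-(\boldsymbol e^\perp)^\top\delta\boldsymbol e^{(D)}=\Delta M(1+e^2)/\eta+\eta^2\Delta\tilde\omega$. Multiplying by $\eta/\chi$ yields $\Delta M+\eta^3\Delta\tilde\omega/\chi$, as claimed. The map coefficients involve only $\boldsymbol e$ and $\eta,\chi$, which are polynomial in $e_x,e_y$ (note $\eta/\chi$ is smooth for $e<1$). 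Both sets are smooth in quasi-nonsingular coordinates by Appendix~\ref{app:regularization} and Ref.~\cite{delurgio2024}, so the identity extends by continuity to $e=0$. There $\boldsymbol e=0$ and $\eta=\chi=1$, and the map is the identity. Keplerian constancy of the map follows because $\boldsymbol e$ is constant under Keplerian motion.

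For part~2, $C_1=a\eta^2\Delta a/a\cdot a-2ae\Delta e\cdot$ gives $C_1/a=\eta^2\Delta a/a-2e\Delta e=\delta p$ by Eq.~\eqref{eq:semilatus-offset}. Next, $\chi C_1-2eC_2=a[\chi\eta^2\Delta a/a-2e\chi\Delta e-2e^2\eta^2\Delta a/a+2e\chi\Delta e]=a\eta^2(\chi-2e^2)\Delta a/a=a\eta^4\delta a$. From $C_2=-aE_\parallel$ and $C_3=-aE_\perp$, we get $a\boldsymbol R(-\omega)\delta\boldsymbol e=-[C_2,C_3]^\top$, which gives the $\delta\boldsymbol e$ relation. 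Finally, $\eta C_4+2eC_3/(\eta\chi)=a[\Delta M+\eta\Delta\tilde\omega-2e^2\eta\Delta\tilde\omega/\chi]=a[\Delta M+\eta(1-e^2)\Delta\tilde\omega/\chi]$, which equals $a\,\delta\lambda$.

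The failure of the limit $e\to0$ follows because the rotation $\boldsymbol R(\omega)$ depends on the undefined $\omega$. The only delicate step is bookkeeping: fixing the sign and rotation conventions of $\boldsymbol R(\cdot)$ and of the C-set consistently with Table~\ref{tab:roe_representation_comparison}, and justifying the continuity extension at $e=0$ for part~1.
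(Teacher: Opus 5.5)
Your proposal is correct and follows essentially the same route as the paper's proof: direct substitution of the table definitions, resolving $\delta\boldsymbol e^{(D)}$ and $\delta\boldsymbol e$ in the periapsis frame and using $C_2=-aE_\parallel$, $C_3=-aE_\perp$, and $[C_5,C_6]^\top=-a\boldsymbol R(-\omega)\,\delta\boldsymbol i$, with your continuity argument at $e=0$ playing the role of the paper's remark that both sets reduce to the qnsROE there. Only cosmetic fixes are needed: your expression for $C_1$ is garbled and should read $C_1=a(\eta^2\Delta a/a-2e\,\Delta e)$, and $\eta$ is smooth, not polynomial, in $(e_x,e_y)$ for $e<1$.
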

\begin{proof}
Direct substitution of the definitions of Table~\ref{tab:roe_representation_comparison} into Eqs.~\eqref{eq:roe-definition}, \eqref{eq:classical-definitions}, and~\eqref{eq:semilatus-offset}, using $\boldsymbol R(-\omega)\,\delta\boldsymbol e^{(D)}=[\Delta e,\,-e\Delta M/\eta]^{\top}$, $C_2=-aE_\parallel$, $C_3=-aE_\perp$, and $[C_5,C_6]^{\top}=-a\boldsymbol R(-\omega)\,\delta\boldsymbol i$.
\end{proof}

Statement 1 ties the proposed set to the qnsROE lineage, developed in the RTN frame.
With the identification $\boldsymbol c_{\mathrm{RTN}}=\delta\boldsymbol\alpha^{(D)}(t_0)$ of Sec.~\ref{sec:rtn-limitation}, it makes $\delta\boldsymbol\alpha(t_0)$ a constant linear transformation of the YA integration constants.
Section~\ref{sec:cartesian-derivation} further shows that the proposed EROE are equivalent to the normalized integration constants of the Cartesian VNB equations of motion.
Equation~\eqref{eq:map-delurgio} reduces to the identity at $e=0$, where the Delurgio--D'Amico EROE and the proposed EROE both reduce to the qnsROE.
Statement 2 shows that the proposed set carries the C-set geometry within the VNB frame.
The N and B rows of Eq.~\eqref{eq:position} are equivalent to Eqs.~(52) and~(25) of Ref.~\cite{peters2021cotangential}.
The main difference between the two sets lies in the reference axes, where the C-set resolves the relative eccentricity and inclination vectors along the chief apsidal line, which is undefined at $e=0$, whereas Eq.~\eqref{eq:roe-definition} resolves them along the line of nodes, as the qnsROE do.
This choice gives the proposed set its qnsROE limit, so that the STMs, bounds, and input matrix derived below reduce continuously to their qnsROE counterparts as $e\to0$.
Furthermore, expressing $\rho_V$ in terms of $C_4$ introduces the additional term $2eqC_3/(\eta^2\chi)$ in Eq.~\eqref{eq:map-cset}, which lies outside the harmonic pair. In contrast, $\delta\lambda$ absorbs this contribution, leaving the harmonic terms associated with $\delta\boldsymbol e$ in $\rho_V$ and $\rho_B$, as shown in Eq.~\eqref{eq:position}.

\subsection{Passive Safety in the BN Plane}
\label{sec:passive-safety}

Equation~\eqref{eq:position} separates the two planes relevant to passive safety against the unstable mode.
The VB plane carries the Keplerian drift.
Its curve is closed for $\delta a=0$, whereas $\delta a\ne0$ adds secular motion along $V$.
The BN plane carries the bounded geometry, and its components do not contain $\delta\lambda$.
The minimum BN-plane separation from the chief is
\begin{equation}
d_{\mathrm{BN}}=a\min_{0\le\theta<2\pi}
\sqrt{\frac{(\delta p-\delta e_x\cos\theta-\delta e_y\sin\theta)^2}{k^2q^2}
+\frac{(\delta i_x\sin\theta-\delta i_y\cos\theta)^2}{k^2}}.
\label{eq:bn-minimum}
\end{equation}
This subsection first characterizes when the BN curve passes through the chief and how $\delta p$ sets its offset, and then bounds $d_{\mathrm{BN}}$ for centered designs.

\subsubsection{Crossing Conditions and Centering by the Relative Semilatus Rectum}

By Eq.~\eqref{eq:semilatus-offset}, the BN curve is offset from the chief by the relative semilatus rectum, which in the C-set is the relative altitude $a\delta p=C_1$ of the safe-orbit center~\cite{peters2021cotangential}.
For $\delta i>0$, $\rho_N$ vanishes only at $\theta\in\{\phi_i,\phi_i+\pi\}$, where $\rho_B=0$ requires $\delta p=\pm\delta e\cos(\phi_i-\phi_e)$.
The BN curve therefore passes through the chief when
\begin{equation}
\delta p^2\delta i^2=(\delta e_x\delta i_x+\delta e_y\delta i_y)^2.
\label{eq:bn-crossing}
\end{equation}
For $\delta i=0$, it does so when $|\delta p|\le\delta e$.
Equation~\eqref{eq:bn-crossing} is the boundary of the winding condition $|C_1|<|C_m\cos\tau_0|$ of Ref.~\cite{peters2021cotangential}, with $C_m=a\delta e$ and $\tau_0=\phi_i-\phi_e$ by Proposition~\ref{prop:equivalence}.
A crossing can be avoided either by exploiting the offset or by removing it.

An offset larger than the shape amplitude gives separation in the BV plane.
The inequalities $|\delta p-\delta e\cos(\theta-\phi_e)|\ge|\delta p|-\delta e$ and $kq\le(1+e)^2$ imply
\begin{equation}
|\delta p|>\delta e
\quad\Longrightarrow\quad
d_{\mathrm{BN}}\ge\frac{a(|\delta p|-\delta e)}{(1+e)^2}>0.
\label{eq:orientation-independent-bound}
\end{equation}
This option is sharply limited by the common origin of $\delta p$ and $\delta\boldsymbol e$.
With $\chi\delta p=\eta^4\delta a-2eE_\parallel$ and $\delta e^2=E_\parallel^2+E_\perp^2$, the quantity $\chi^2(\delta p^2-\delta e^2)$ is a concave quadratic in $E_\parallel$ with leading coefficient $4e^2-\chi^2=-\eta^4$.
Its maximum, attained at $E_\parallel=-2e\,\delta a$, is $\chi^2(\eta^4\delta a^2-E_\perp^2)$ because $\eta^4+4e^2=\chi^2$.
At fixed $E_\perp$, some $E_\parallel$ therefore satisfies $|\delta p|>\delta e$ if and only if
\begin{equation}
|E_\perp|<\eta^2|\delta a|.
\label{eq:orientation-independent-attainability}
\end{equation}
Moreover, $\chi|\delta p|\le\eta^4|\delta a|+2e\,\delta e$ and $\chi-2e=(1-e)^2$ show that every state with $|\delta p|>\delta e$ obeys $\delta e<(1+e)^2|\delta a|$, that is, $a\delta e<(1+e)^2|\Delta a|$.
Orientation-independent separation is therefore restricted to formations of the same order as the semimajor-axis difference.

Larger formations are instead centered by matching semilatus recta, $\delta p=0$, which removes the term $a\delta p/(kq)$ from $\rho_B$.
For $e>0$, Eq.~\eqref{eq:semilatus-offset} shows that centering requires
\begin{equation}
\delta p = 0 \quad \Leftrightarrow \quad
    E_\parallel=\frac{\eta^4}{2e}\delta a.
\label{eq:centering-condition}
\end{equation}
Near the circular limit, the $1/e$ scaling makes it preferable to null a residual $\delta a$ rather than to apply this condition.

Figure~\ref{fig:frame-drift-centered} illustrates centering for the example of Sec.~\ref{sec:vnb-geometry}.
With the same \SI{100}{\metre} offset as Fig.~\ref{fig:frame-drift-vnb}, but with $\Delta e$ chosen so that $\delta p=0$, which requires $aE_\parallel=\SI{88}{\metre}$, the phase/energy center lies on the $V$ axis while the deputy drifts along $V$.
The remaining binormal motion is the harmonic $\delta\boldsymbol e$ term, amplified by $1/(kq)$ near apoapsis.

\subsubsection{Phase-Robust Separation Bound}

Peters and Noomen bounded the closest approach of a safe-orbit family conservatively by its inner boundary at perigee, where both scale factors take their largest value, and evaluated that boundary for parallel relative eccentricity and inclination vectors~\cite{peters2021cotangential}.
The following result gives the corresponding bound in closed form for arbitrary vector orientation and adds the margin consumed by a semimajor-axis residual.
\begin{proposition}[Phase-robust BN-plane separation]
\label{prop:bn-bound}
Consider Keplerian relative motion about a chief with $0\le e<1$ and $\delta p=0$, and let $\widetilde{\delta\boldsymbol e}=\delta\boldsymbol e/(1+e)$, with magnitude $\widetilde{\delta e}$.
Then, for every initial longitude $\delta\lambda_0$ and all $t\ge t_0$, the separation satisfies
\begin{equation}
\|\boldsymbol\rho(t)\|\ge d_{\mathrm{BN}}\ge
d_{\mathrm{BN}}^{\mathrm{lb}}:=
\frac{\sqrt2\,a}{1+e}\,
\frac{|\widetilde{\delta\boldsymbol e}^{\top}\delta\boldsymbol i|}
{\sqrt{\widetilde{\delta e}^2+\delta i^2
+\|\widetilde{\delta\boldsymbol e}+\delta\boldsymbol i\|\,
\|\widetilde{\delta\boldsymbol e}-\delta\boldsymbol i\|}}.
\label{eq:bn-bound}
\end{equation}
The second inequality becomes an equality at $e=0$, where Eq.~\eqref{eq:bn-bound} is the near-circular minimum distance in the RN plane~\cite{damico2006ei}.
For given $\delta e$ and $\delta i$, $d_{\mathrm{BN}}^{\mathrm{lb}}$ is largest when $\delta\boldsymbol e\parallel\delta\boldsymbol i$, for which the closest approach of the perigee inner boundary of Ref.~\cite{peters2021cotangential} is
\begin{equation}
d_{\mathrm{BN}}^{\mathrm{lb}}
=a\min\left\{\frac{\delta e}{(1+e)^2},\,\frac{\delta i}{1+e}\right\}.
\label{eq:bn-bound-parallel}
\end{equation}
If a semimajor-axis residual $\Delta a_{\mathrm{err}}$ is subsequently added, then $\rho_B$ shifts by $\eta^2\Delta a_{\mathrm{err}}/q$, and for every $\delta\lambda_0$ and all $t\ge t_0$ the separation satisfies
\begin{equation}
d_{\mathrm{BN}}\ge d_{\mathrm{BN}}^{\mathrm{lb}}-(1+e)|\Delta a_{\mathrm{err}}|.
\label{eq:bn-physical-da-residual-bound}
\end{equation}
\end{proposition}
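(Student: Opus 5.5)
With $\delta p=0$, the plan is to bound the two BN components of Eq.~\eqref{eq:position} pointwise by a near-circular expression, minimize that expression over $\theta$ in closed form, and then treat the residual $\Delta a_{\mathrm{err}}$ as a bounded perturbation of $\rho_B$.

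\textbf{Reduction to a near-circular expression.} The first inequality $\|\boldsymbol\rho(t)\|\ge d_{\mathrm{BN}}$ holds because the BN projection discards $\rho_V$. Under Keplerian motion the EROE other than $\delta\lambda$ are constant. By Eq.~\eqref{eq:position}, $\delta\lambda$ and the secular drift of Eq.~\eqref{eq:vnb-secular} enter only $\rho_V$. Hence $(\rho_B,\rho_N)$ is a fixed periodic function of $\theta$, independent of $\delta\lambda_0$ and of $t$. With $\delta p=0$, Eq.~\eqref{eq:position} gives $\rho_B/a=-\delta e\cos(\theta-\phi_e)/(kq)$ and $\rho_N/a=\delta i\sin(\theta-\phi_i)/k$. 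I would then use $k\le 1+e$ and $q^2=2k-\eta^2\le(1+e)^2$, so that $kq\le(1+e)^2$. This gives, pointwise in $\theta$,
\[
\frac{\rho_B^2+\rho_N^2}{a^2}\ge\frac{1}{(1+e)^2}\Big[\widetilde{\delta e}^2\cos^2(\theta-\phi_e)+\delta i^2\sin^2(\theta-\phi_i)\Big].
\]
This is exactly the near-circular RN expression with $\delta\boldsymbol e$ replaced by $\widetilde{\delta\boldsymbol e}$. At $e=0$ we have $k=q=1$, so every step is an equality, which gives the equality claim.

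\textbf{Closed-form minimum (main obstacle).} I expect the minimization to be the only real computation. Write $\boldsymbol c=[\cos\theta,\sin\theta]^\top$. Then $\widetilde{\delta e}\cos(\theta-\phi_e)=\widetilde{\delta\boldsymbol e}^\top\boldsymbol c$ and $\delta i\sin(\theta-\phi_i)=(\boldsymbol J\delta\boldsymbol i)^\top\boldsymbol c$, where $\boldsymbol J$ is a rotation by $\pm90^\circ$. The bracket is therefore $\boldsymbol c^\top\boldsymbol A\boldsymbol c$ with $\boldsymbol A=\widetilde{\delta\boldsymbol e}\widetilde{\delta\boldsymbol e}^\top+\boldsymbol J\delta\boldsymbol i(\boldsymbol J\delta\boldsymbol i)^\top$, and its minimum over unit $\boldsymbol c$ is $\lambda_{\min}(\boldsymbol A)$. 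We have $\operatorname{tr}\boldsymbol A=\widetilde{\delta e}^2+\delta i^2$ and $\det\boldsymbol A=(\widetilde{\delta\boldsymbol e}\times\boldsymbol J\delta\boldsymbol i)^2=(\widetilde{\delta\boldsymbol e}^\top\delta\boldsymbol i)^2$. The identity $(x^2+y^2)^2-4(\boldsymbol x^\top\boldsymbol y)^2=\|\boldsymbol x+\boldsymbol y\|^2\|\boldsymbol x-\boldsymbol y\|^2$ then gives $\operatorname{tr}^2-4\det=\|\widetilde{\delta\boldsymbol e}+\delta\boldsymbol i\|^2\|\widetilde{\delta\boldsymbol e}-\delta\boldsymbol i\|^2$. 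Writing $\lambda_{\min}=2\det/(\operatorname{tr}+\sqrt{\operatorname{tr}^2-4\det})$ and taking the square root produces Eq.~\eqref{eq:bn-bound}.

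\textbf{Parallel case and maximality.} Fix $\delta e$ and $\delta i$ and let $c=|\cos\angle(\delta\boldsymbol e,\delta\boldsymbol i)|$. Then $\det\boldsymbol A=\widetilde{\delta e}^2\delta i^2c^2$ increases with $c$. The discriminant $\operatorname{tr}^2-4\det$ decreases with $c$. Hence $\lambda_{\min}$ is maximized at $c=1$. There $\sqrt{\operatorname{tr}^2-4\det}=|\widetilde{\delta e}^2-\delta i^2|$, so $\lambda_{\min}=\min\{\widetilde{\delta e}^2,\delta i^2\}$, which is Eq.~\eqref{eq:bn-bound-parallel}.

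\textbf{Semimajor-axis residual.} Adding $\Delta a_{\mathrm{err}}$ changes only the classical difference $\Delta a$. By Eq.~\eqref{eq:vnb-classical-map}, $\rho_N$ is unchanged and $\rho_B$ gains $\eta^2\Delta a_{\mathrm{err}}/q$. The only other effect is the $V$-directed drift, which does not enter the BN plane. Since $k\ge1-e$, we get $q^2\ge 2(1-e)-(1-e^2)=(1-e)^2$, so $\eta^2/q\le 1+e$. The reverse triangle inequality in the BN plane, applied at each $\theta$, then gives Eq.~\eqref{eq:bn-physical-da-residual-bound}, uniformly in $\delta\lambda_0$ and $t$.
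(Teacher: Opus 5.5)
Your proof is correct and follows the paper's proof essentially step for step: the BN components are independent of $\delta\lambda$, the pointwise bound comes from $k,q\le 1+e$, the minimization over $\theta$ is done in closed form, and the residual shift $\eta^2\Delta a_{\mathrm{err}}/q\le(1+e)|\Delta a_{\mathrm{err}}|$ is combined with the reverse triangle inequality. The differences are only presentational: your $\lambda_{\min}$ of the $2\times2$ quadratic form via trace and determinant is the same computation as the paper's double-angle combination into one sinusoid of amplitude $R=\sqrt{(\operatorname{tr}\boldsymbol A)^2-4\det\boldsymbol A}$, and you read the $\rho_B$ shift directly off Eq.~\eqref{eq:vnb-classical-map}, whereas the paper propagates it through $\Delta(\delta p)$ and $\Delta(\delta\boldsymbol e)$ in Eq.~\eqref{eq:position}.
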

\begin{proof}
Under the Keplerian dynamics, only $\delta\lambda$ varies, which does not enter $\rho_B$ or $\rho_N$ in Eq.~\eqref{eq:position}.
The BN curve is therefore a fixed $2\pi$-periodic function of $\theta$, so $\|\boldsymbol\rho(t)\|\ge\sqrt{\rho_B^2+\rho_N^2}\ge d_{\mathrm{BN}}$ for every $\delta\lambda_0$ and all $t$.
With $\delta p=0$, Eq.~\eqref{eq:position}, $k,q\le1+e$, and a double-angle expansion give
\begin{equation*}
\begin{aligned}
\frac{\rho_B^2+\rho_N^2}{a^2}
&=\frac{\delta e^2\cos^2(\theta-\phi_e)}{k^2q^2}+\frac{\delta i^2\sin^2(\theta-\phi_i)}{k^2} \\
& \ge\frac{\widetilde{\delta e}^2\cos^2(\theta-\phi_e)+\delta i^2\sin^2(\theta-\phi_i)}{(1+e)^2}\\
&=\frac{\widetilde{\delta e}^2+\delta i^2+\widetilde{\delta e}^2\cos2(\theta-\phi_e)-\delta i^2\cos2(\theta-\phi_i)}{2(1+e)^2}\\
&\ge\frac{\widetilde{\delta e}^2+\delta i^2-\|\widetilde{\delta\boldsymbol e}+\delta\boldsymbol i\|\,\|\widetilde{\delta\boldsymbol e}-\delta\boldsymbol i\|}{2(1+e)^2}
=\left(\frac{d_{\mathrm{BN}}^{\mathrm{lb}}}{a}\right)^2.
\end{aligned}
\end{equation*}
The two cosines share the frequency $2\theta$ and differ in phase by $2(\phi_i-\phi_e)$, so they combine into a single sinusoid of amplitude $R$ with
$R^2=\widetilde{\delta e}^4+\delta i^4-2\widetilde{\delta e}^2\delta i^2\cos2(\phi_i-\phi_e)=(\widetilde{\delta e}^2+\delta i^2)^2-4(\widetilde{\delta\boldsymbol e}^{\top}\delta\boldsymbol i)^2=\|\widetilde{\delta\boldsymbol e}+\delta\boldsymbol i\|^2\|\widetilde{\delta\boldsymbol e}-\delta\boldsymbol i\|^2$.
The second inequality replaces this sinusoid by its minimum $-R$.
The first inequality is an equality when $e=0$, which gives $d_{\mathrm{BN}}=d_{\mathrm{BN}}^{\mathrm{lb}}$.
The same amplitude shows that the bound increases with $|\widetilde{\delta\boldsymbol e}^{\top}\delta\boldsymbol i|$ at fixed magnitudes and reduces to Eq.~\eqref{eq:bn-bound-parallel} for parallel vectors.
For a residual in $a$ alone, Eqs.~\eqref{eq:roe-definition}, \eqref{eq:classical-definitions}, and~\eqref{eq:semilatus-offset} give $\Delta(\delta\boldsymbol e)=-\eta^2(\Delta a_{\mathrm{err}}/a)\boldsymbol e$ and $\Delta(\delta p)=\eta^2\Delta a_{\mathrm{err}}/a$, so Eq.~\eqref{eq:position}, $k=1+e_x\cos\theta+e_y\sin\theta$, and $q\ge1-e$ give
\begin{equation*}
|\Delta\rho_B|
=\frac{a\,|\Delta(\delta p)-\Delta(\delta e_x)\cos\theta-\Delta(\delta e_y)\sin\theta|}{kq}
=\frac{\eta^2|\Delta a_{\mathrm{err}}|}{q}
\le(1+e)|\Delta a_{\mathrm{err}}|.
\end{equation*}
The normal component $\rho_N$ is unchanged.
The reverse triangle inequality then gives Eq.~\eqref{eq:bn-physical-da-residual-bound}.
\end{proof}
The bound replaces $k$ and $q$ by their periapsis value $1+e$ and is therefore conservative away from periapsis.
For parallel vectors, the BN distance equals $a\delta e/(kq)$ at the zeros of $\rho_N$, $\theta\in\{\phi_e,\phi_e+\pi\}$, and $a\delta i/k$ at the zeros of $\rho_B$, $\theta=\phi_e\pm\pi/2$.
One member of each antipodal pair has $\cos\nu\ge0$, and hence $k\ge1$ and $q\ge\sqrt\chi$, which gives
\begin{equation}
d_{\mathrm{BN}}\le a\min\left\{\frac{\delta e}{\sqrt\chi},\,\delta i\right\},
\qquad
1\le\frac{d_{\mathrm{BN}}}{d_{\mathrm{BN}}^{\mathrm{lb}}}\le\frac{(1+e)^2}{\sqrt\chi}.
\label{eq:bn-sharpness}
\end{equation}
The factor $(1+e)^2/\sqrt\chi$ increases monotonically from 1 at $e=0$ toward $2\sqrt2$ as $e\to1$, and it is approached by $\delta a=0$ designs with $\delta i\gg\delta e$.
For $\delta a=0$ and $e>0$, Eq.~\eqref{eq:centering-condition} gives $E_\parallel=0$, so $\phi_e=\omega\pm\pi/2$ and the zeros of $\rho_B$ fall at periapsis and apoapsis.
Equation~\eqref{eq:bn-bound-parallel} is then exact whenever $\delta e\ge(1+e)\,\delta i$, as in the formations of Sec.~\ref{sec:passive-safety-case-study}.

The residual estimate of Proposition~\ref{prop:bn-bound} extends to any perturbation that acts on $\rho_B$ alone.
An unmodeled semilatus-rectum error $\delta p_{\mathrm{err}}$, with $\delta\boldsymbol e$ and $\delta\boldsymbol i$ held fixed, shifts $\rho_B$ by $a\delta p_{\mathrm{err}}/(kq)$, and $kq\ge(1-e)^2$ gives
\begin{equation}
d_{\mathrm{BN}}\ge d_{\mathrm{BN}}^{\mathrm{lb}}-\frac{a|\delta p_{\mathrm{err}}|}{(1-e)^2}.
\label{eq:bn-residual-bound}
\end{equation}
A physical residual produces $\delta p_{\mathrm{err}}=\eta^2\Delta a_{\mathrm{err}}/a$, for which Eq.~\eqref{eq:bn-residual-bound} would give the weaker margin $(1+e)|\Delta a_{\mathrm{err}}|/(1-e)$.
Equation~\eqref{eq:bn-physical-da-residual-bound} is sharper because the accompanying change in $\delta\boldsymbol e$ cancels the $1/k$ dependence of the shift.
In either case, the clearance allowance for an unknown energy residual is independent of the accumulated phase.

For formations whose scale exceeds the expected energy error, set $\delta p=0$ to center the nominal BN geometry.
Choose $\delta\boldsymbol e\parallel\delta\boldsymbol i$ and size the vectors through Eq.~\eqref{eq:bn-bound-parallel}.
Reserve the margin $(1+e)|\Delta a_{\mathrm{err}}|$ of Eq.~\eqref{eq:bn-physical-da-residual-bound} for semimajor-axis uncertainty.
This mirrors near-circular E/I-vector separation, where the nominal clearance above the keep-out radius is sized to accommodate semimajor-axis uncertainty.
As $e\to0$, the BN plane becomes the RN plane and the reserved margin reduces to $|\Delta a_{\mathrm{err}}|$.
Section~\ref{sec:roe-stm} shows why the relative-vector orientation must also be monitored over longer, perturbed horizons.

\section{State Transition Matrices and Control Input Matrix}
\label{sec:stm-control}

This section derives the Keplerian and secular-$J_2$ STMs and the impulsive control input matrix of the proposed EROE.

\subsection{State Transition Matrices}
\label{sec:roe-stm}

Under Keplerian motion, the EROE rates are
\begin{equation}
\dot{\delta\lambda}=-\frac32n\delta a,\qquad
\dot{\delta a}=\dot{\delta e_x}=\dot{\delta e_y}
=\dot{\delta i_x}=\dot{\delta i_y}=0.
\end{equation}
Integration gives the STM
\begin{equation}
\delta\boldsymbol\alpha(t)=\boldsymbol\Phi_K(\tau)\delta\boldsymbol\alpha(t_0),
\qquad
\boldsymbol\Phi_K(\tau)=
\begin{bmatrix}
1&0&0&0&0&0\\
-\tfrac32n\tau&1&0&0&0&0\\
0&0&1&0&0&0\\
0&0&0&1&0&0\\
0&0&0&0&1&0\\
0&0&0&0&0&1
\end{bmatrix}.
\end{equation}
This STM is identical to the qnsROE form.
The C-set also has a single drifting element, but its rate depends on both $C_1$ and $C_2$~\cite{peters2021cotangential}.
The Keplerian STM of the Delurgio--D'Amico EROE also contains secular terms in $\delta\boldsymbol e^{(D)}$~\cite{delurgio2024}.

\subsubsection{$J_2$-perturbed STM and $J_2$-invariant formation design}

Ref.~\cite{koenig2017stm} provides a convenient way to derive the STM from the the variational equations of the classical orbital elements and the linear mapping to the EROE.
In this subsection, the secular $J_2$ effects are introduced as an example to demonstrate the derivation of the STM in the proposed EROE space.
The first-order secular-$J_2$ mean-element rates are~\cite{schaub2001j2}
\begin{subequations}
\begin{equation}
\dot a=\dot e=\dot i=0, \quad
\dot\Omega =-2\kappa c_i, \quad
\dot\omega =\kappa(5c_i^2-1), \quad
\dot M =n+\kappa\eta (3c_i^2-1).
\label{eq:j2-chief-rates}
\end{equation}
The chief inclination factors and the secular coefficient are defined by
\begin{equation}
c_i=\cos i,\qquad s_i=\sin i,\qquad
\kappa=\frac34J_2n\left(\frac{R_b}{a}\right)^2\eta^{-4},
\end{equation}
\end{subequations}
where $J_2$ is the second zonal harmonic and $R_b$ is the radius of the central body.
At $e=0$, $\dot\omega$ denotes the eccentricity-vector rotation rate.
From Eqs.~\eqref{eq:roe-definition} and\eqref{eq:j2-chief-rates}, the secular-$J_2$ model keeps $\Delta a$, $\Delta e$, and $\Delta i$ constant.
With $\Delta\psi:=\Delta\omega+c_i\Delta\Omega$, linearizing Eq.~\eqref{eq:j2-chief-rates} and differentiating Eq.~\eqref{eq:roe-definition} gives
\begin{equation}
\begin{aligned}
\ell&:=\frac{\Delta\kappa}{\kappa}=-\frac72\frac{\Delta a}{a}+\frac{4e\Delta e}{\eta^2},\\
\sigma_\Omega&:=\Delta\dot\Omega=\kappa(-2c_i\ell+2s_i\Delta i),\\
\sigma_\psi&:=\Delta\dot\psi=\Delta\dot\omega+c_i\Delta\dot\Omega=\kappa[(3c_i^2-1)\ell-8c_is_i\Delta i],\\
\sigma_M&:=\Delta\dot M=-\frac32n\frac{\Delta a}{a}
+\kappa\eta\left[(3c_i^2-1)\left(\ell-\frac{e\Delta e}{\eta^2}\right)-6c_is_i\Delta i\right],\\
\sigma_\lambda&:=\dot{\delta\lambda}=\sigma_M+\frac{\eta^3}{\chi}\sigma_\psi,\qquad
\dot E_\perp=\eta^2e\,\sigma_\psi,\qquad
\dot{\delta i_y}=\eta^2s_i\sigma_\Omega.
\end{aligned}
\label{eq:j2-rates}
\end{equation}
To express these rates directly in the proposed EROE, let $\delta\boldsymbol\alpha_0:=\delta\boldsymbol\alpha(t_0)$, $D=3c_i^2-1$, $e_{x0}=e\cos\omega_0$, and $e_{y0}=e\sin\omega_0$.
With all chief quantities held fixed at $t_0$, the longitude-, precession-, and nodal-rate Jacobians with respect to the initial EROE are
\begin{equation}
\begin{aligned}
\boldsymbol L&:=\frac{\partial\sigma_\lambda}{\partial(\delta\boldsymbol\alpha_0)}
=\begin{bmatrix}
-\dfrac32n-\dfrac{\kappa\eta D(7+e^4)}{\chi^2}&0&
\dfrac{\kappa D(7-e^2)e_{x0}}{\eta\chi^2}&
\dfrac{\kappa D(7-e^2)e_{y0}}{\eta\chi^2}&
-\dfrac{2\kappa c_is_i(7-e^2)}{\eta\chi}&0
\end{bmatrix},\\[4pt]
\boldsymbol P&:=\frac{\partial\sigma_\psi}{\partial(\delta\boldsymbol\alpha_0)}
=\begin{bmatrix}
-\dfrac{\kappa D(7-e^2)}{2\chi}&0&
\dfrac{4\kappa D e_{x0}}{\eta^2\chi}&
\dfrac{4\kappa D e_{y0}}{\eta^2\chi}&
-\dfrac{8\kappa c_is_i}{\eta^2}&0
\end{bmatrix},\\[4pt]
\boldsymbol O&:=\frac{\partial\sigma_\Omega}{\partial(\delta\boldsymbol\alpha_0)}
=\begin{bmatrix}
\dfrac{\kappa c_i(7-e^2)}{\chi}&0&
-\dfrac{8\kappa c_i e_{x0}}{\eta^2\chi}&
-\dfrac{8\kappa c_i e_{y0}}{\eta^2\chi}&
\dfrac{2\kappa s_i}{\eta^2}&0
\end{bmatrix}.
\end{aligned}
\label{eq:j2-coefficient-rows}
\end{equation}
Let $L_j$, $P_j$, and $O_j$ denote the $j$th entries of the coefficient rows in Eq.~\eqref{eq:j2-coefficient-rows}.
In the basis whose eccentricity pair co-rotates with the chief periapsis, these coefficients define the constant shear matrix
\begin{equation}
\boldsymbol B=
\begin{bmatrix}
0&0&0&0&0&0\\
L_1&0&L_3&L_4&L_5&0\\
-\eta^2e_{y0}P_1&0&-\eta^2e_{y0}P_3&-\eta^2e_{y0}P_4&-\eta^2e_{y0}P_5&0\\
\eta^2e_{x0}P_1&0&\eta^2e_{x0}P_3&\eta^2e_{x0}P_4&\eta^2e_{x0}P_5&0\\
0&0&0&0&0&0\\
\eta^2s_iO_1&0&\eta^2s_iO_3&\eta^2s_iO_4&\eta^2s_iO_5&0
\end{bmatrix}.
\label{eq:j2-shear}
\end{equation}
Rows 1 and 5 and columns 2 and 6 are identically zero.
Since $\boldsymbol B^2=\boldsymbol 0$, its exponential is $\exp(\tau\boldsymbol B)=\boldsymbol I_6+\tau\boldsymbol B$.
Considering the rotation of the eccentricity vector due to the $J_2$ perturbation \cite{koenig2017stm}, the closed-form STM is given as
\begin{equation}
\boldsymbol\Phi_{J_2}(\tau)=
\operatorname{diag}\!\left(1,1,\boldsymbol R(\dot\omega\tau),1,1\right)
\left(\boldsymbol I_6+\tau\boldsymbol B\right),\qquad \tau=t-t_0.
\label{eq:j2-stm}
\end{equation}
Equation~\eqref{eq:j2-stm} is exact within the first-order secular mean-element model and reduces to $\boldsymbol\Phi_K(\tau)$ when $J_2=0$.

The derived closed-form STM provides an insight into the $J_2$-invariant formation designs.
Schaub and Alfriend's nodal and mean-argument-of-latitude rate-matching conditions~\cite{schaub2001j2}, with the $J_2\delta a_0$ terms of Eq.~\eqref{eq:j2-rates} retained, are
\begin{equation}
\begin{aligned}
\sigma_\Omega&=0,\\
\sigma_u:=\Delta\dot u
&=\sigma_M+\sigma_\psi-c_i\sigma_\Omega=0.
\end{aligned}
\label{eq:j2-schaub-alfriend}
\end{equation}
The term $-c_i\sigma_\Omega$ removes the nodal contribution, and $\sigma_u=0$ can accommodate a small nonzero $\delta a_0$.

For $\delta a_0=0$ and a non-equatorial chief, these conditions reduce to $e\Delta e=\Delta i=0$, which is equivalent to
\begin{equation}
\delta a_0=\delta p_0=\delta i_{x0}=0.
\label{eq:j2-stable-design}
\end{equation}
For $e>0$, this requires equal mean eccentricities and inclinations.
At $e=0$, the first-order eccentricity constraint vanishes, leaving the qnsROE condition $\delta i_{x0}=0$ \cite{damico2010thesis}.
Nonetheless, Eq.~\eqref{eq:j2-stable-design} still leaves $\delta\boldsymbol e$ rotating at $\dot\omega$.
As in near-circular formations, maintaining the E/I-vector separation margin of Eq.~\eqref{eq:bn-bound} over long horizons generally requires station keeping of $\delta\boldsymbol e$~\cite{damico2010thesis}.

\subsection{Control Input Matrix}
\label{sec:control}

The input matrix can be obtained from the variational equations~\cite{brouwer1959, vallado2001fundamentals}, or equivalently by inverting Eqs.~\eqref{eq:position} and~\eqref{eq:velocity} for a velocity impulse at fixed position.
An impulsive deputy velocity increment resolved in the chief VNB frame changes the elements by
\begin{equation}
\Delta(\delta\boldsymbol\alpha)=\boldsymbol\Gamma(\theta)
\begin{bmatrix}\Delta v_V \\ \Delta v_N \\ \Delta v_B\end{bmatrix},
\quad
\boldsymbol\Gamma(\theta)=\frac1{na}
\begin{bmatrix}
2q/\eta&0&0\\
-2\mathcal C
&0&-\dfrac{2\eta^4}{\chi kq}\\
\dfrac{2\eta^3\cos\theta}{q}&0
&\dfrac{\eta^3[\chi\sin\theta+2e_y]}{kq}\\
\dfrac{2\eta^3\sin\theta}{q}&0
&-\dfrac{\eta^3[\chi\cos\theta+2e_x]}{kq}\\
0&\dfrac{\eta^3\cos\theta}{k}&0\\
0&\dfrac{\eta^3\sin\theta}{k}&0
\end{bmatrix}.
\label{eq:control}
\end{equation}
Evidently, the map is decoupled between the in-plane and out-of-plane dynamics for all $0\le e<1$.
This structure enables straightforward application of the closed-form impulsive-control algorithm developed in Refs.~\cite{chernick2018,chernick2021}, which has previously been applied to the Delurgio--D'Amico EROE~\cite{delurgio2024}.

Through Eq.~\eqref{eq:map-cset}, the V and B columns correspond to the TAN-frame input matrix of the C-set~\cite{peters2021cotangential}.
Retaining $\delta a$ as the first EROE isolates the energy mode, which can be corrected by $\Delta v_V$.
For $e>0$, a subsequent $\Delta v_B$ away from the apsides can adjust $\delta p$ without changing $\delta a$.
Although $\delta p\to\delta a$ as $e\to0$, using it as the first EROE would preserve in-plane/out-of-plane decoupling, as in the C-set, but make that element respond to both impulses.
At $e=0$, where $\theta=u$, $\hat V=\hat T$, and $\hat B=\hat R$, ordering the columns as $[T,N,R]$ reduces Eq.~\eqref{eq:control} to the qnsROE input matrix~\cite{d2005relative}
\begin{equation}
\boldsymbol\Gamma_0(u)=\frac1{na}
\begin{bmatrix}
2&0&0\\0&0&-2\\
2\cos u&0&\sin u\\
2\sin u&0&-\cos u\\
0&\cos u&0\\0&\sin u&0
\end{bmatrix}.
\end{equation}

\section{Direct Cartesian Derivation in the VNB Frame}
\label{sec:cartesian-derivation}

The elements of Sec.~\ref{sec:new-eroes} were constructed from orbital-element differences.
This section provides an independent derivation by solving the Cartesian VNB variational equations directly, without assuming an element-to-Cartesian map.
The derivation yields the VNB counterpart of the YA Cartesian STM and tests whether a physically normalized modal solution selects the proposed EROE, in the same way that the Delurgio--D'Amico EROE coincide with the YA integration constants in the RTN frame~\cite{delurgio2024}.
Closed-form maps between the C-set and the TAN-frame Cartesian state are also available~\cite{peters2021cotangential} and, by Eq.~\eqref{eq:map-cset}, are equivalent to the maps obtained here.
Following the structure of the YA derivation~\cite{yamanaka2002}, the Cartesian variables are scaled, the six modal responses are assembled into a fundamental matrix $\bar{\boldsymbol F}_{\mathrm{VNB}}$, and its inverse action is obtained in closed form.
Section~\ref{sec:cartesian-stm} assembles the physical Cartesian STM, and Sec.~\ref{sec:cartesian-identification} then identifies the modal constants with the EROE of Sec.~\ref{sec:new-eroes} and establishes the equivalence of the two formulations.

\subsection{Relative Equations of Motion in the VNB Frame}
\label{sec:cartesian-eom}

For the VNB state $\boldsymbol x_{\mathrm{VNB}}$ of Eq.~\eqref{eq:relative-states}, the chief radial direction and the frame angular velocity resolved in VNB are $\hat{\boldsymbol R}^{V}=[S/q,0,k/q]^{\top}$, the first column of $\boldsymbol R_{\mathrm{VR}}$ in Eq.~\eqref{eq:rtn-to-vnb}, and $\boldsymbol\omega_V=[0,\omega_V,0]^{\top}$.
Linearizing the two-body relative equation of motion in this rotating frame gives
\begin{equation}
\ddot{\boldsymbol\rho}^{V}
=\left[
\frac{\mu}{r_c^3}\left(3\hat{\boldsymbol R}^{V}
(\hat{\boldsymbol R}^{V})^{\top}-\boldsymbol I_3\right)
-[\dot{\boldsymbol\omega}_V]_\times
-[\boldsymbol\omega_V]_\times^2
\right]\boldsymbol\rho^{V}
-2[\boldsymbol\omega_V]_\times\dot{\boldsymbol\rho}^{V}.
\label{eq:vnb-cartesian-variational}
\end{equation}

\subsection{Scaling of the Variables}
\label{sec:cartesian-scaling}

Let $(\cdot)'$ denote differentiation with respect to $\theta$.
As in the YA derivation~\cite{yamanaka2002}, scaling the coordinates by the length $\bar r:=a/k=r_c/\eta^2$ gives
\begin{equation}
\bar X=\frac{\rho_V}{\bar r},\qquad
\bar Y=\frac{\rho_N}{\bar r},\qquad
\bar Z=\frac{\rho_B}{\bar r}.
\label{eq:vnb-radius-scaled-state}
\end{equation}
This normalizes the coordinates by the chief radius up to the constant factor $\eta^2$.
Referring the scaling to $\bar r$ rather than to $r_c$ keeps that factor out of the de-scaling matrices below.
With $\dot\theta$ and $\omega_V$ of Eq.~\eqref{eq:chief-notation}, the change of independent variable uses the identities
\begin{equation}
\begin{aligned}
\frac{d}{dt}&=\dot\theta\frac{d}{d\theta},
&\frac{\ddot\theta}{\dot\theta^2}&=-\frac{2S}{k},\\
\frac{\mu/r_c^3}{\dot\theta^2}&=\frac1k,
&\frac{\dot\omega_V}{\dot\theta^2}&=\frac{S(2k-3q^2)}{q^4}.
\end{aligned}
\end{equation}
Because $k'=-S$, the scaling removes the first-derivative term $(\ddot\theta/\dot\theta^2)\boldsymbol\rho'$ introduced by the change of independent variable, and substitution of Eq.~\eqref{eq:vnb-radius-scaled-state} into Eq.~\eqref{eq:vnb-cartesian-variational} gives
\begin{equation}
\begin{aligned}
\bar X''={}&
\left(\frac{3S^2}{kq^2}-1+\frac{k^2}{q^4}\right)\bar X
+\frac{2S(2q^2-k)}{q^4}\bar Z
-\frac{2k}{q^2}\bar Z',\\
\bar Y''+\bar Y={}&0,\\
\bar Z''={}&
\frac{2S(q^2+k)}{q^4}\bar X
+\left(\frac{3k}{q^2}-1+\frac{k^2}{q^4}\right)\bar Z
+\frac{2k}{q^2}\bar X'.
\end{aligned}
\label{eq:vnb-radius-scaled-dynamics}
\end{equation}
The scaled state is
\begin{equation}
\bar{\boldsymbol x}_{\mathrm{VNB}}=
\begin{bmatrix}
\bar X&\bar Y&\bar Z&\bar X'&\bar Y'&\bar Z'
\end{bmatrix}^{\top}.
\label{eq:radius-scaled-state-vector}
\end{equation}
The out-of-plane equation has reduced to the harmonic oscillator $\bar Y''+\bar Y=0$, identical to the derivation of the radius-normalized YA STM.

The physical state is recovered by differentiating $[\rho_V,\rho_N,\rho_B]^{\top}=\bar r\,[\bar X,\bar Y,\bar Z]^{\top}$, which gives
\begin{equation}
\boldsymbol x_{\mathrm{VNB}}(t)=\boldsymbol Q(t)\bar{\boldsymbol x}_{\mathrm{VNB}}(t),
\qquad
\boldsymbol Q(t)=
\begin{bmatrix}
\bar r\boldsymbol I_3&\boldsymbol 0\\
\dot{\bar r}\boldsymbol I_3&\bar r\dot\theta \boldsymbol I_3
\end{bmatrix},
\qquad
\dot{\bar r}=\frac{aS\dot\theta}{k^2}=\frac{anS}{\eta^3}.
\label{eq:descaling}
\end{equation}
Using $\dot{\bar r}/(\bar r^2\dot\theta)=S/a$, the block-triangular inverse is
\begin{equation}
\boldsymbol Q^{-1}(t)=
\frac1a
\begin{bmatrix}
k\boldsymbol I_3&\boldsymbol 0\\
-S\boldsymbol I_3&
\dfrac{k}{\dot\theta}\boldsymbol I_3
\end{bmatrix},
\qquad
\frac{k}{\dot\theta}=\frac{\eta^3}{nk}.
\label{eq:descaling-inverse}
\end{equation}

\subsection{Integration Constants and Their Responses}
\label{sec:cartesian-fsm}

The solution separates into two normal harmonic modes, two coupled in-plane harmonic modes, one phase mode, and one energy mode.
Solving the decoupled oscillator in Eq.~\eqref{eq:vnb-radius-scaled-dynamics} gives the normal pair
\begin{equation}
\bar Y=K_{Nx}\sin\theta-K_{Ny}\cos\theta,
\qquad
\bar Y'=K_{Nx}\cos\theta+K_{Ny}\sin\theta.
\label{eq:vnb-normal-response}
\end{equation}
The coupled $(\bar X,\bar Z)$ subsystem has the conserved amplitude
\begin{equation}
A=\frac{2}{\eta^4q}
\left[kq^2\bar X'+S(q^2+k)\bar X+2k^2\bar Z\right].
\label{eq:vnb-energy-integral}
\end{equation}
The equivalent first integral is
\begin{equation}
\bar X'=w_X\bar X+w_Z\bar Z+w_AA,
\quad
w_X=-\frac{S(q^2+k)}{kq^2},
\quad
w_Z=-\frac{2k}{q^2},
\quad
w_A=\frac{\eta^4}{2kq}.
\label{eq:vnb-radius-scaled-first-integral}
\end{equation}
Defining the binormal variable $b=q\bar Z$ and substituting Eq.~\eqref{eq:vnb-radius-scaled-first-integral} into the last equation of Eq.~\eqref{eq:vnb-radius-scaled-dynamics} yields
\begin{equation}
\begin{aligned}
b''+b&=P,
&b&=P-K_x\cos\theta-K_y\sin\theta,\\
\chi P&=\eta^4A-2(e_xK_x+e_yK_y),
&P'&=0.
\end{aligned}
\label{eq:vnb-reduced-cartesian-modes}
\end{equation}
The in-plane harmonic pair is defined by
\begin{equation}
g=K_x\cos\theta+K_y\sin\theta,
\qquad
h=K_x\sin\theta-K_y\cos\theta.
\label{eq:vnb-harmonic-pair}
\end{equation}
The binormal response is
\begin{equation}
\bar Z=\frac{P-g}{q},
\qquad
\bar Z'=\frac{h}{q}+\frac{S}{q^2}\bar Z.
\label{eq:vnb-binormal-response}
\end{equation}
Integrating the first integral of Eq.~\eqref{eq:vnb-radius-scaled-first-integral}, whose homogeneous solution is $kq$, gives the velocity-direction response
\begin{equation}
\bar X=k\left[
\mathcal C A+\frac q\eta\Lambda+\frac{2h}{\chi q}
\right],
\qquad
\Lambda'=-\frac{3\eta^3}{2k^2}A,
\qquad
\Lambda(t)=\Lambda_0-\tfrac32n\tau A,
\label{eq:vnb-cartesian-responses}
\end{equation}
where $\mathcal C$ is the velocity-direction coefficient of Eq.~\eqref{eq:position}.
The last equality follows from $dM/d\theta=\eta^3/k^2$ and $M-M_0=n\tau$.
The secular shear is thus obtained from the Cartesian equations rather than assumed from an ROE propagation law.

The constants are normalized using only Cartesian modal properties.
The conventions of Eqs.~\eqref{eq:vnb-normal-response} and~\eqref{eq:vnb-reduced-cartesian-modes} make $K_x$ and $K_y$ the unit coefficients of $-\cos\theta$ and $-\sin\theta$ in the homogeneous part of $b=q\bar Z$, and $K_{Nx}$ and $K_{Ny}$ the unit coefficients of $\sin\theta$ and $-\cos\theta$ in $\bar Y$.
The phase variable is normalized so that $\Lambda/n$ is the first-order time offset.
Its physical displacement is then $v_c\Lambda/n$ along $\hat V$, which fixes the factor $q/\eta$ in Eq.~\eqref{eq:vnb-cartesian-responses}.
The conserved amplitude $A$ is normalized by the secular shear rate $\dot\Lambda=-\tfrac32nA$, which fixes the prefactor in Eq.~\eqref{eq:vnb-energy-integral}.
Finally, the bounded in-plane modes are assigned no free time-offset constant, and their unit amplitudes and the Cartesian equations then fix the term $2h/(\chi q)$ in Eq.~\eqref{eq:vnb-cartesian-responses}.
Together, Eqs.~\eqref{eq:vnb-normal-response}--\eqref{eq:vnb-cartesian-responses} provide the six independent integration constants
\begin{equation}
\boldsymbol c_{\mathrm{VNB}}=
\begin{bmatrix}
A&\Lambda_0&K_x&K_y&K_{Nx}&K_{Ny}
\end{bmatrix}^{\top}.
\label{eq:vnb-integration-constants}
\end{equation}

\subsection{Closed-Form Cartesian VNB State Transition Matrix}
\label{sec:cartesian-stm}

\subsubsection{Explicit Fundamental Matrix}

The fundamental matrix is written at the running time $t$, whose true argument of latitude $\theta$ follows from Kepler's equation, with elapsed time $\tau=t-t_0$ as in Eq.~\eqref{eq:chief-notation}.
Every anomaly-dependent scalar it contains has already been introduced: $k$, $S$, and $q$ of Eq.~\eqref{eq:chief-notation}, the coefficient $\mathcal C$ of Eq.~\eqref{eq:position}, and the first-integral weights $w_X$, $w_Z$, and $w_A$ of Eq.~\eqref{eq:vnb-radius-scaled-first-integral}.
A subscript $0$ on any of these scalars denotes evaluation at the reference epoch $t_0$, where $\tau=0$.
Collecting the response of each constant in Eq.~\eqref{eq:vnb-integration-constants} as a column, in the order $[A,\Lambda_0,K_x,K_y,K_{Nx},K_{Ny}]$, gives the radius-scaled fundamental matrix
\begin{equation}
\bar{\boldsymbol x}_{\mathrm{VNB}}(t)=\bar{\boldsymbol F}_{\mathrm{VNB}}(t,t_0)\boldsymbol c_{\mathrm{VNB}},
\qquad
\bar{\boldsymbol F}_{\mathrm{VNB}}(t,t_0)=
\begin{bmatrix}
\bar F_{11}&\bar F_{12}&\bar F_{13}&\bar F_{14}&0&0\\
0&0&0&0&\bar F_{25}&\bar F_{26}\\
\bar F_{31}&0&\bar F_{33}&\bar F_{34}&0&0\\
\bar F_{41}&\bar F_{42}&\bar F_{43}&\bar F_{44}&0&0\\
0&0&0&0&\bar F_{55}&\bar F_{56}\\
\bar F_{61}&0&\bar F_{63}&\bar F_{64}&0&0
\end{bmatrix}.
\label{eq:response-matrix-explicit}
\end{equation}
Its nonzero entries, all evaluated at $\theta$, are
\begin{equation}
\begin{aligned}
\bar F_{11}&=k\left(\mathcal C-\frac{3n\tau q}{2\eta}\right),
&\bar F_{12}&=\frac{kq}{\eta},\\
\bar F_{13}&=\frac{2ks_\theta}{\chi q},
&\bar F_{14}&=-\frac{2kc_\theta}{\chi q},\\
\bar F_{25}&=s_\theta,
&\bar F_{26}&=-c_\theta,\\
\bar F_{31}&=\frac{\eta^4}{\chi q},
&\bar F_{33}&=-\frac{c_\theta+2e_x/\chi}{q},
&\bar F_{34}&=-\frac{s_\theta+2e_y/\chi}{q},\\
\bar F_{41}&=w_X\bar F_{11}+w_Z\bar F_{31}+w_A,
&\bar F_{42}&=w_X\bar F_{12},\\
\bar F_{43}&=w_X\bar F_{13}+w_Z\bar F_{33},
&\bar F_{44}&=w_X\bar F_{14}+w_Z\bar F_{34},\\
\bar F_{55}&=c_\theta,
&\bar F_{56}&=s_\theta,\\
\bar F_{61}&=\frac{S\eta^4}{\chi q^3},\\
\bar F_{63}&=\frac{s_\theta}{q}
-\frac{S}{q^3}\left(c_\theta+\frac{2e_x}{\chi}\right),
&\bar F_{64}&=-\frac{c_\theta}{q}
-\frac{S}{q^3}\left(s_\theta+\frac{2e_y}{\chi}\right).
\end{aligned}
\label{eq:response-scalar-entries}
\end{equation}

\subsubsection{Explicit Inverse of the Fundamental Matrix}
\label{sec:cartesian-inverse}

Direct evaluation of Eq.~\eqref{eq:response-matrix-explicit} gives
\begin{equation}
\det\bar{\boldsymbol F}_{\mathrm{VNB}}(t,t_0)
=\frac{\eta^3}{2\chi}>0,
\qquad 0\leq e<1,
\label{eq:radius-scaled-response-determinant}
\end{equation}
which guarantees unique recovery of the six modal constants at any reference anomaly.
The inverse rows follow the constant order $[A,\Lambda_0,K_x,K_y,K_{Nx},K_{Ny}]$, and its columns follow the scaled-state order $[\bar X_0,\bar Y_0,\bar Z_0,\bar X_0',\bar Y_0',\bar Z_0']$.
Solving the modal responses of Eqs.~\eqref{eq:vnb-normal-response}--\eqref{eq:vnb-cartesian-responses} at $t_0$, where $\tau=0$, gives the analytical inverse
\begin{equation}
\begin{gathered}
\boldsymbol c_{\mathrm{VNB}}=\bar{\boldsymbol G}_0\,\bar{\boldsymbol x}_{\mathrm{VNB}}(t_0),
\qquad
\bar{\boldsymbol G}_0:=\bar{\boldsymbol F}_{\mathrm{VNB}}^{-1}(t_0,t_0),\\[4pt]
\bar{\boldsymbol G}_0=
\begin{bmatrix}
\bar G_{11}&0&\bar G_{13}&\bar G_{14}&0&0\\
\bar G_{21}&0&\bar G_{23}&\bar G_{24}&0&\bar G_{26}\\
\bar G_{31}&0&\bar G_{33}&\bar G_{34}&0&\bar G_{36}\\
\bar G_{41}&0&\bar G_{43}&\bar G_{44}&0&\bar G_{46}\\
0&s_{\theta0}&0&0&c_{\theta0}&0\\
0&-c_{\theta0}&0&0&s_{\theta0}&0
\end{bmatrix}.
\end{gathered}
\label{eq:radius-scaled-fundamental-inverse}
\end{equation}
Here, $c_{\theta0}=\cos\theta_0$ and $s_{\theta0}=\sin\theta_0$.
The scalar entries at the reference epoch, evaluated with the first-row entries computed before those of the second row, are
\begin{equation}
\begin{aligned}
\bar G_{11}&=\frac{2S_0(q_0^2+k_0)}{\eta^4q_0},
&\bar G_{13}&=\frac{4k_0^2}{\eta^4q_0},\\
\bar G_{14}&=\frac{2k_0q_0}{\eta^4},\\[3pt]
\bar G_{21}&=\frac{\eta}{k_0q_0}-\frac{\eta\mathcal C_0}{q_0}\bar G_{11},
&\bar G_{23}&=\frac{2\eta S_0}{\chi q_0^3}-\frac{\eta\mathcal C_0}{q_0}\bar G_{13},\\
\bar G_{24}&=-\frac{\eta\mathcal C_0}{q_0}\bar G_{14},
&\bar G_{26}&=-\frac{2\eta}{\chi q_0},\\[3pt]
\bar G_{31}&=\frac{2S_0(q_0^2+k_0)}{q_0^3}c_{\theta0},
&\bar G_{41}&=\frac{2S_0(q_0^2+k_0)}{q_0^3}s_{\theta0},\\
\bar G_{33}&=\left(\frac{\eta^2}{q_0}+\frac{2k_0^2}{q_0^3}\right)c_{\theta0}
-\frac{S_0}{q_0}s_{\theta0},
&\bar G_{43}&=\left(\frac{\eta^2}{q_0}+\frac{2k_0^2}{q_0^3}\right)s_{\theta0}
+\frac{S_0}{q_0}c_{\theta0},\\
\bar G_{34}&=\frac{2k_0}{q_0}c_{\theta0},
&\bar G_{44}&=\frac{2k_0}{q_0}s_{\theta0},\\
\bar G_{36}&=q_0s_{\theta0}-\frac{2S_0}{q_0}c_{\theta0},
&\bar G_{46}&=-q_0c_{\theta0}-\frac{2S_0}{q_0}s_{\theta0}.
\end{aligned}
\label{eq:radius-scaled-inverse-entries}
\end{equation}
The identities $q_0^2=k_0^2+S_0^2$ and $2-\chi=\eta^2$ simplify the in-plane harmonic coefficients to the forms above.

\subsubsection{Physical State Transition Matrix}

De-scaling Eq.~\eqref{eq:response-matrix-explicit} with Eq.~\eqref{eq:descaling} gives the physical fundamental matrix $\boldsymbol F_{\mathrm{VNB}}(t,t_0):=\boldsymbol Q(t)\bar{\boldsymbol F}_{\mathrm{VNB}}(t,t_0)$.
Its initial-state inverse is $\boldsymbol F_{\mathrm{VNB}}^{-1}(t_0,t_0)=\bar{\boldsymbol G}_0\boldsymbol Q^{-1}(t_0)$, with $\boldsymbol Q^{-1}$ given by Eq.~\eqref{eq:descaling-inverse}.
Eliminating $\boldsymbol c_{\mathrm{VNB}}$ between the initial and final Cartesian states gives the analytical STM in the VNB frame as
\begin{equation}
\boldsymbol{\Phi}_{\mathrm{VNB}}(t,t_0)
=\boldsymbol Q(t)\bar{\boldsymbol F}_{\mathrm{VNB}}(t,t_0)\,
\bar{\boldsymbol G}_0\boldsymbol Q^{-1}(t_0).
\label{eq:vnb-cartesian-stm}
\end{equation}
Note that every factor is nonsingular for $0\leq e<1$ by Eqs.~\eqref{eq:descaling} and~\eqref{eq:radius-scaled-response-determinant}.

\subsection{Identification with the Eccentric Relative Orbital Elements}
\label{sec:cartesian-identification}

Both $\boldsymbol c_{\mathrm{VNB}}$ and $\delta\boldsymbol\alpha(t_0)$ are complete sets of constants for the same linear system, so they must be related by a constant nonsingular matrix.
The remaining question is the form of that matrix under the intrinsic Cartesian normalization specified above.
Comparing the radius-scaled position responses of Eqs.~\eqref{eq:vnb-normal-response}--\eqref{eq:vnb-cartesian-responses} with $k$ times Eq.~\eqref{eq:position} gives
\begin{equation}
\boldsymbol c_{\mathrm{VNB}}
=\begin{bmatrix}A&\Lambda_0&K_x&K_y&K_{Nx}&K_{Ny}\end{bmatrix}^{\top}
=\begin{bmatrix}\delta a&\delta\lambda_0&\delta e_x&\delta e_y&\delta i_x&\delta i_y\end{bmatrix}^{\top}
=\delta\boldsymbol\alpha(t_0).
\label{eq:cartesian-identification}
\end{equation}
Under the normalization of Sec.~\ref{sec:cartesian-fsm}, the constant transformation between the two sets is therefore the identity.
The C-set corresponds to a different normalization of the same modes, related to this one by Eq.~\eqref{eq:map-cset}.
The same comparison identifies the particular-solution constant $P$ of Eq.~\eqref{eq:vnb-reduced-cartesian-modes} with the normalized semilatus-rectum difference $\delta p$ of Eq.~\eqref{eq:semilatus-offset}, which is why $\delta p$, and not $\delta a$, centers the BN geometry.

Let $\bar{\boldsymbol\Psi}_{\mathrm{VNB}}(t):=\bar{\boldsymbol F}_{\mathrm{VNB}}(t,t)$ denote the scaled instantaneous map, obtained by setting $\tau=0$ in the evaluated entries of Eq.~\eqref{eq:response-scalar-entries}.
Its physical counterpart is $\boldsymbol\Psi_{\mathrm{VNB}}(t):=\boldsymbol Q(t)\bar{\boldsymbol\Psi}_{\mathrm{VNB}}(t)$, the matrix form of Eqs.~\eqref{eq:position} and~\eqref{eq:velocity}, so $\boldsymbol x_{\mathrm{VNB}}(t)=\boldsymbol\Psi_{\mathrm{VNB}}(t)\,\delta\boldsymbol\alpha(t)$.
At any epoch, the instantaneous EROE can therefore be recovered from the physical state by applying $\boldsymbol Q^{-1}$ and using Eqs.~\eqref{eq:radius-scaled-fundamental-inverse} and~\eqref{eq:radius-scaled-inverse-entries} with that epoch as the reference.
The radius normalization remains confined to the auxiliary Cartesian state.

Using the EROE STM of Sec.~\ref{sec:roe-stm}, the physical fundamental matrix satisfies $\boldsymbol F_{\mathrm{VNB}}(t,t_0)=\boldsymbol\Psi_{\mathrm{VNB}}(t)\boldsymbol\Phi_K(\tau)$.
Substitution into Eq.~\eqref{eq:vnb-cartesian-stm} gives
\begin{equation}
\boldsymbol\Phi_{\mathrm{VNB}}(t,t_0)=\boldsymbol\Psi_{\mathrm{VNB}}(t)\,\boldsymbol\Phi_K(\tau)\,\boldsymbol\Psi_{\mathrm{VNB}}^{-1}(t_0).
\label{eq:vnb-stm-factorization}
\end{equation}
This factorization establishes that the independent Cartesian and EROE derivations yield the same first-order Keplerian Cartesian propagation.

\section{Numerical Results}
\label{sec:numerical-results}

This section evaluates the formulation numerically in two parts.
Section~\ref{sec:passive-safety-case-study} compares formations sized by the RN-plane separation condition of Ref.~\cite{delurgio2024} and by the BN bound of Proposition~\ref{prop:bn-bound} under a semimajor-axis residual for eccentric chiefs.
Section~\ref{sec:stm-accuracy} verifies the finite-horizon accuracy of the proposed STMs against the YA baseline.

\subsection{Passive-Safety Drift Case Study}
\label{sec:passive-safety-case-study}

This case study compares two E/I-separated formations sized for the same projected-separation target after reserving a margin for semimajor-axis uncertainty at chief eccentricities $e=0.1$, $0.4$, and $0.8$: one by the RN-plane separation condition of Ref.~\cite{delurgio2024}, Eq.~\eqref{eq:legacy-rn-design}, and one by the BN bound of Proposition~\ref{prop:bn-bound}.
The two formations are physically different, and the same semimajor-axis residual is applied to both.

The chief mean angular elements at the initial epoch are $i=\SI{75}{\degree}$, $\Omega=\SI{50}{\degree}$, $\omega=\SI{0}{\degree}$, and $M_0=\SI{0}{\degree}$, and the perigee altitude is fixed at \SI{700}{\kilo\metre}.
Table~\ref{tab:passive-safety-initial-roes} lists the resulting semimajor axes and orbital periods $T_c$.
A spherical keep-out zone (KOZ) of radius $r_{\mathrm{KOZ}}=\SI{500}{\metre}$ is used so that the physical safety metric is invariant to the RTN--VNB rotation.
Both formations target $\Delta a=0$ and are sized for a semimajor-axis error magnitude $|\Delta a_{\mathrm{err}}|=\SI{100}{\metre}$.
Each formation reserves $(1+e)|\Delta a_{\mathrm{err}}|$ for the bounded radial or binormal offset, leaving a projected-separation target $d_{\min}=\SI{700}{\metre}$, \SI{200}{\metre} above the keep-out radius.
The nominal minimum distances in RN- and BN-planes are therefore \SI{810}{\metre}, \SI{840}{\metre}, and \SI{880}{\metre} at the three eccentricities.
A physical residual $\Delta a_{\mathrm{err}}=\SI{100}{\metre}$ is then added to the deputy semimajor axis without redesign, and both cases are propagated for six chief orbits.
This deliberately large residual is a stress parameter chosen to make the secular drift visible over a short horizon.

The Delurgio--D'Amico formation takes $\delta\boldsymbol e^{(D)}\parallel\delta\boldsymbol i^{(D)}$ with equal magnitudes~\cite{delurgio2024}, sized by
\begin{equation}
\frac{a\|\delta\boldsymbol i^{(D)}\|}{1+e}
=d_{\min}+(1+e)|\Delta a_{\mathrm{err}}|.
\label{eq:legacy-rn-design}
\end{equation}
In the proposed formation, $E_\parallel=0$ makes $\delta\boldsymbol e\perp\boldsymbol e$, and the parallel inclination vector is sized through Eq.~\eqref{eq:bn-bound-parallel}, with the residual allowance of Eq.~\eqref{eq:bn-physical-da-residual-bound}, to give the same nominal BN lower bound.
Because $\omega=0$ places $\delta\boldsymbol e$, and hence $\delta\boldsymbol i$, along the negative $y$ axis, $\delta i_x=0$, and the nominal state therefore satisfies the condition of zero differential rate in Eq.~\eqref{eq:j2-stable-design}.
By Proposition~\ref{prop:equivalence}, this nominal state is the drift-free centered safe orbit of Ref.~\cite{peters2021cotangential}, $C_1=C_2=0$ and $\tau_0=0$.
The allowance in Eq.~\eqref{eq:legacy-rn-design} bounds the periodic radial term $\eta^2\Delta a_{\mathrm{err}}/k$ of Eq.~\eqref{eq:rtn-response}, mirroring the near-circular E/I-vector separation practice of reserving clearance for semimajor-axis uncertainty.
In the circular limit, the residual leaves a first-order projected separation of at least \SI{700}{\metre}.
The comparison tests whether this fixed allowance remains sufficient under eccentric Keplerian phase drift.
Both formations share $\delta\boldsymbol i=\delta\boldsymbol i^{(D)}$, and both native longitude coordinates are set to $a\delta\lambda^{(D)}_0=a\delta\lambda_0=\SI{2}{\kilo\metre}$.
The relative eccentricity columns of Table~\ref{tab:passive-safety-initial-roes} are not directly comparable, because by Eq.~\eqref{eq:map-delurgio} the Delurgio--D'Amico formation has $a\delta e=\SI{691}{\metre}$, \SI{376}{\metre}, and \SI{16}{\metre} in the proposed elements, and the factor $1+e$ in the proposed $a\delta e$ follows from the conservative bound of Eq.~\eqref{eq:bn-bound-parallel} rather than from the separation mechanism.

\begin{table*}[hbt!]
\centering
\caption{Chief orbits and nominal formation designs.}
\label{tab:passive-safety-initial-roes}
\begin{tabular}{@{}ccc cccc@{}}
\toprule
& & & \multicolumn{2}{c}{Both formations} & Delurgio--D'Amico & Proposed \\
\cmidrule(lr){4-5}\cmidrule(lr){6-6}\cmidrule(l){7-7}
$e$ & $a$, km & $T_c$, h & Nominal bound, m & $a\delta i$, m & $a\delta e^{(D)}$, m & $a\delta e$, m \\
\midrule
0.1 & 7864.6  & 1.93  & 810 & 891  & 891  & 980.1  \\
0.4 & 11796.9 & 3.54  & 840 & 1176 & 1176 & 1646.4 \\
0.8 & 35390.7 & 18.41 & 880 & 1584 & 1584 & 2851.2 \\
\bottomrule
\end{tabular}
\end{table*}

Each initial ROE vector is converted to a deputy's classical orbital elements.
The chief and deputy are then propagated independently under nonlinear two-body dynamics, converted to Cartesian position at every sample, and differenced in the chief's instantaneous RTN or VNB frame.
All trajectories in the figures and all separations in Table~\ref{tab:passive-safety-minima} are therefore independent of the first-order relative position mappings.

\begin{table*}[hbt!]
\centering
\caption{Six-orbit minimum separations (m) with the \SI{100}{\metre} residual; $r_{\mathrm{KOZ}}=\SI{500}{\metre}$.}
\label{tab:passive-safety-minima}
\begin{tabular}{@{}c cc cc@{}}
\toprule
& \multicolumn{2}{c}{Delurgio--D'Amico} & \multicolumn{2}{c}{Proposed} \\
\cmidrule(lr){2-3}\cmidrule(l){4-5}
$e$ & min.\ RN & Worst 3D & min.\ BN & Worst 3D \\
\midrule
0.1 & 245 & 245 & 778 & 778 \\
0.4 &  26 &  30 & 835 & 835 \\
0.8 & 324 & 325 & 880 & 880 \\
\bottomrule
\end{tabular}
\end{table*}

Table~\ref{tab:passive-safety-minima} reports the minimum distance in the RN and BN planes, which to first order does not depend on the initial longitude, and the worst-case 3D separation over the longitude sweep of Fig.~\ref{fig:passive-safety-lambda-sweeps}.
Without the residual, nonlinear propagation reproduces the nominal minimum RN distances of the Delurgio--D'Amico formation.
The residual activates the secular radial term of Eq.~\eqref{eq:rtn-response}, reducing its RN minimum to as little as \SI{26}{\metre} despite the reserved allowance.
Sizing by the RN-plane separation condition can retain protection over a specified finite horizon by reserving additional clearance for this time-growing term.
The BN minima of the proposed formation remain above the \SI{700}{\metre} guarantee of Eq.~\eqref{eq:bn-physical-da-residual-bound} and match the first-order minima of Eq.~\eqref{eq:bn-minimum} to within \SI{0.5}{\metre}.
For both formations, the worst-case 3D separation nearly equals the projected minimum, so the Delurgio--D'Amico formation enters the keep-out sphere at every eccentricity, whereas the proposed formation never does.

Figures~\ref{fig:passive-safety-e04} and~\ref{fig:passive-safety-e04-motion} show the geometric mechanism at $e=0.4$.
In the projected separation histories of Fig.~\ref{fig:passive-safety-e04-projected}, the RN-plane separation of the Delurgio--D'Amico formation repeatedly drops below the keep-out radius, whereas the BN-plane separation of the proposed formation retains its lower bound.
The corresponding 3D separation in Fig.~\ref{fig:passive-safety-e04-3d} falls to $0.66\,r_{\mathrm{KOZ}}$ for the Delurgio--D'Amico formation but stays above $1.95\,r_{\mathrm{KOZ}}$ for the proposed formation.
In the RN projection of Fig.~\ref{fig:passive-safety-e04-motion-rn}, the zero-$\delta a$ loop of the Delurgio--D'Amico formation remains outside the keep-out circle, but the residual-$\delta a$ trajectory sweeps across it as the phase--energy mode develops a time-growing radial component.
For the proposed formation, the BN projection of Fig.~\ref{fig:passive-safety-e04-motion-bn} shows that the residual adds the bounded binormal offset $\eta^2\Delta a_{\mathrm{err}}/q$ of Sec.~\ref{sec:passive-safety}.
As Fig.~\ref{fig:passive-safety-e04-motion-vb} illustrates, the accumulated phase drift remains confined to $V$ and cannot create secular BN growth.
This behavior is predicted by Eq.~\eqref{eq:vnb-secular} and by the fixed margin in Eq.~\eqref{eq:bn-physical-da-residual-bound}.

\begin{figure}[hbt!]
\centering
\begin{subfigure}[t]{0.49\textwidth}
\centering
\includegraphics[width=\linewidth]{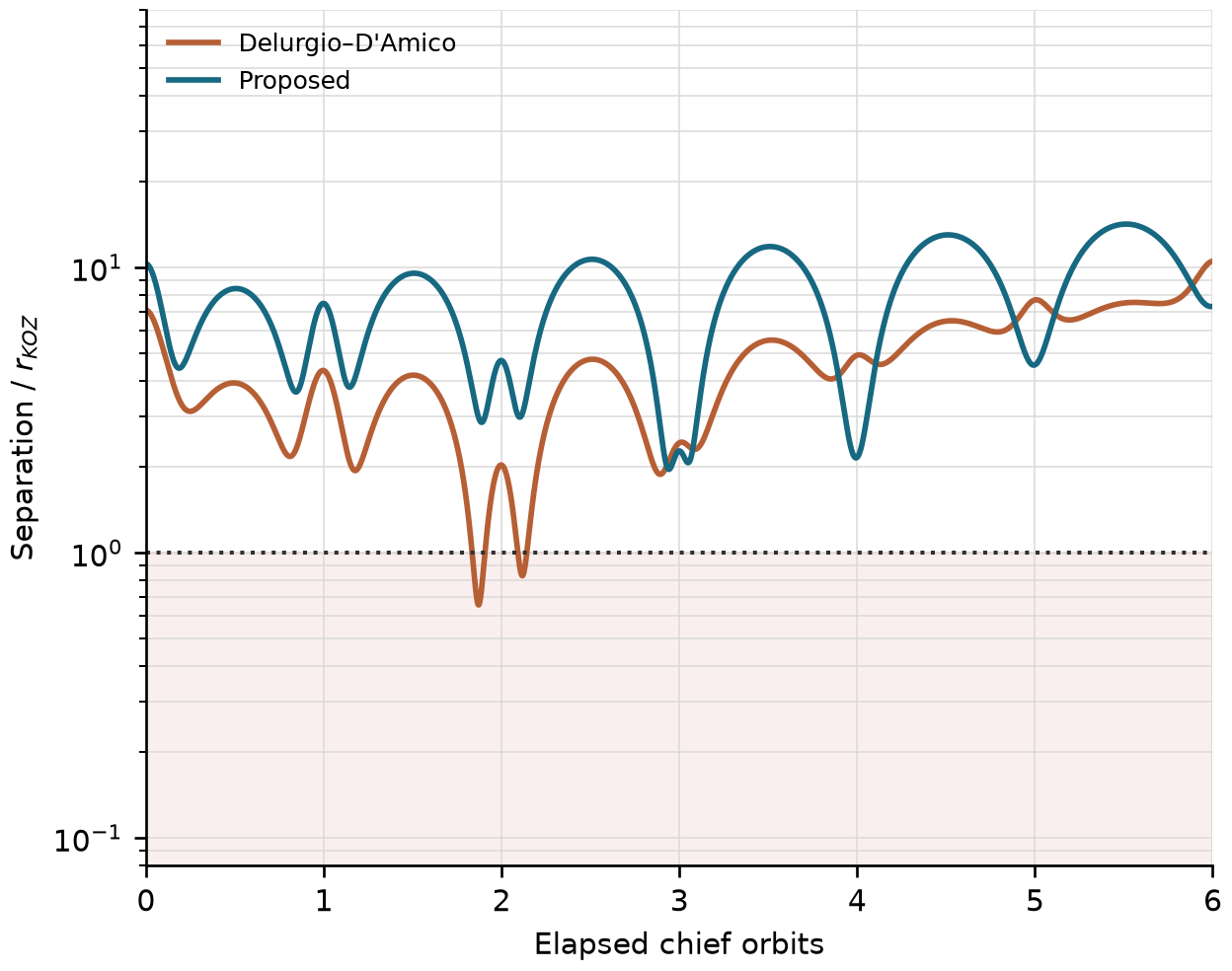}
\caption{Full three-dimensional separation.}
\label{fig:passive-safety-e04-3d}
\end{subfigure}\hfill
\begin{subfigure}[t]{0.49\textwidth}
\centering
\includegraphics[width=\linewidth]{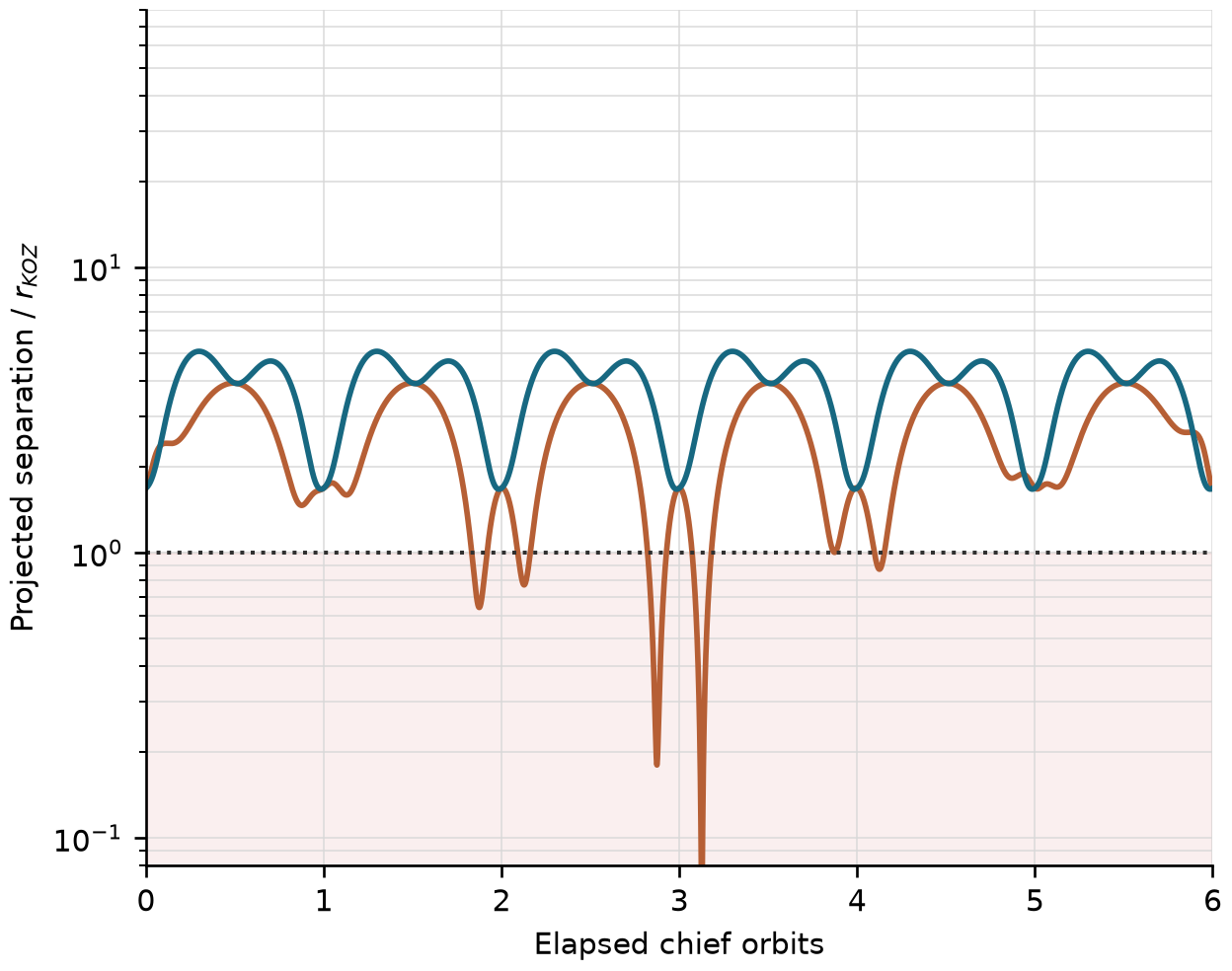}
\caption{RN-plane separation (Delurgio--D'Amico) and BN-plane separation (proposed).}
\label{fig:passive-safety-e04-projected}
\end{subfigure}
\caption{Nonlinear separation histories for $e=0.4$ with the \SI{100}{\metre} residual and $a\delta\lambda^{(D)}_0=a\delta\lambda_0=\SI{2}{\kilo\metre}$, normalized by $r_{\mathrm{KOZ}}$. The shaded region marks the keep-out zone.}
\label{fig:passive-safety-e04}
\end{figure}
\begin{figure}[hbt!]
\centering
\begin{subfigure}[t]{0.49\textwidth}
\centering
\includegraphics[width=\linewidth]{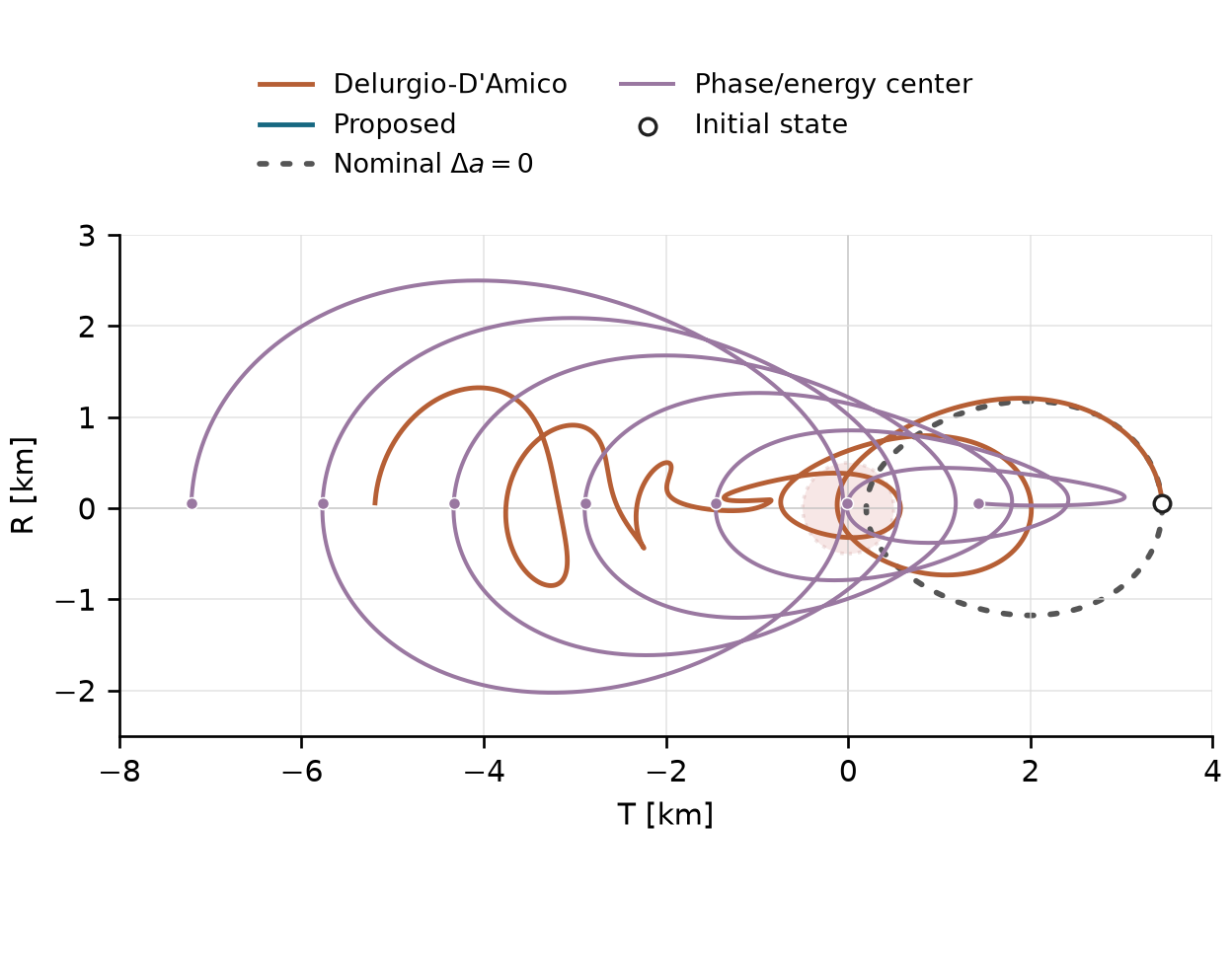}
\caption{Delurgio--D'Amico formation, RT projection.}
\label{fig:passive-safety-e04-motion-rt}
\end{subfigure}\hfill
\begin{subfigure}[t]{0.49\textwidth}
\centering
\includegraphics[width=\linewidth]{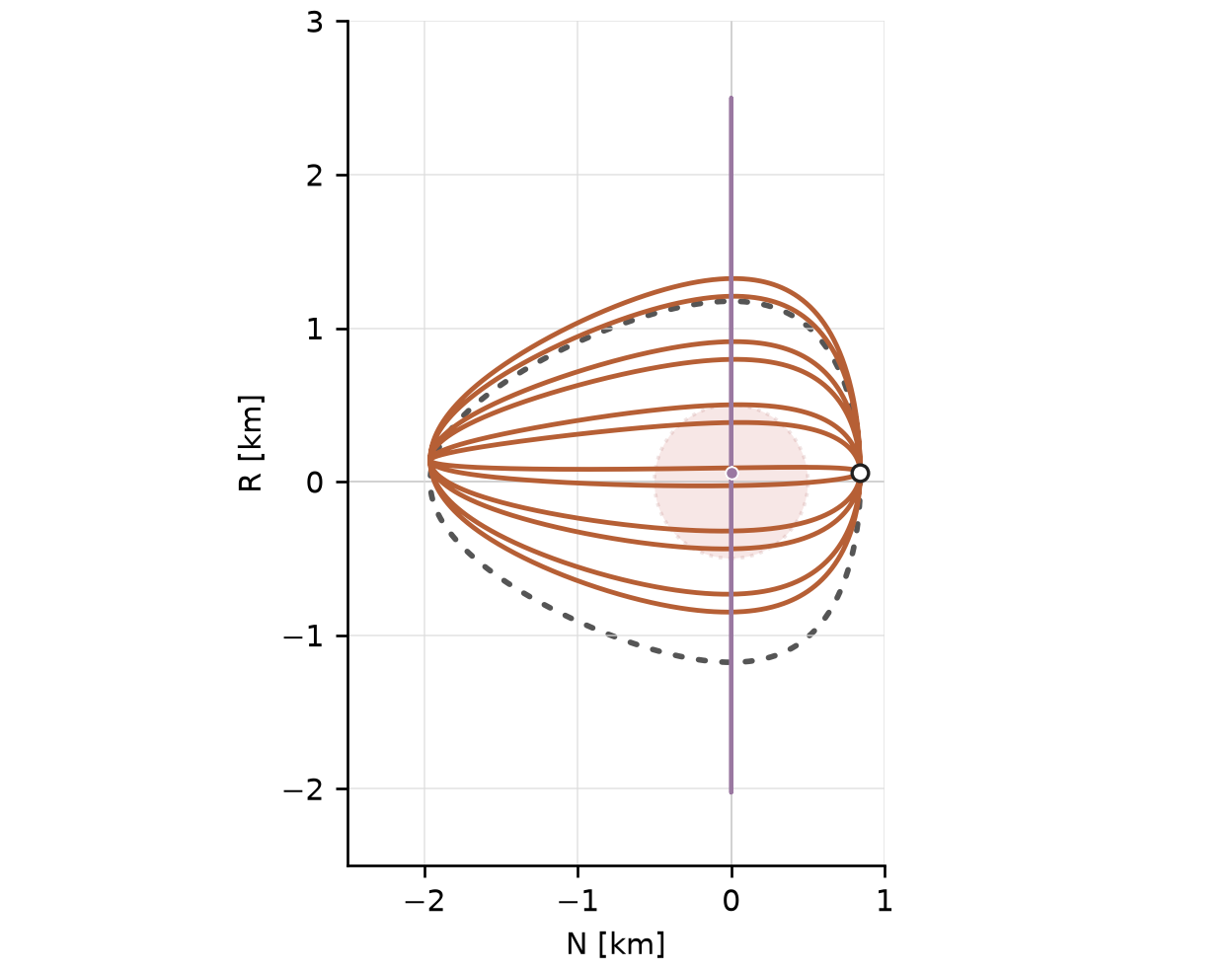}
\caption{Delurgio--D'Amico formation, RN projection.}
\label{fig:passive-safety-e04-motion-rn}
\end{subfigure}
\par\medskip
\begin{subfigure}[t]{0.49\textwidth}
\centering
\includegraphics[width=\linewidth]{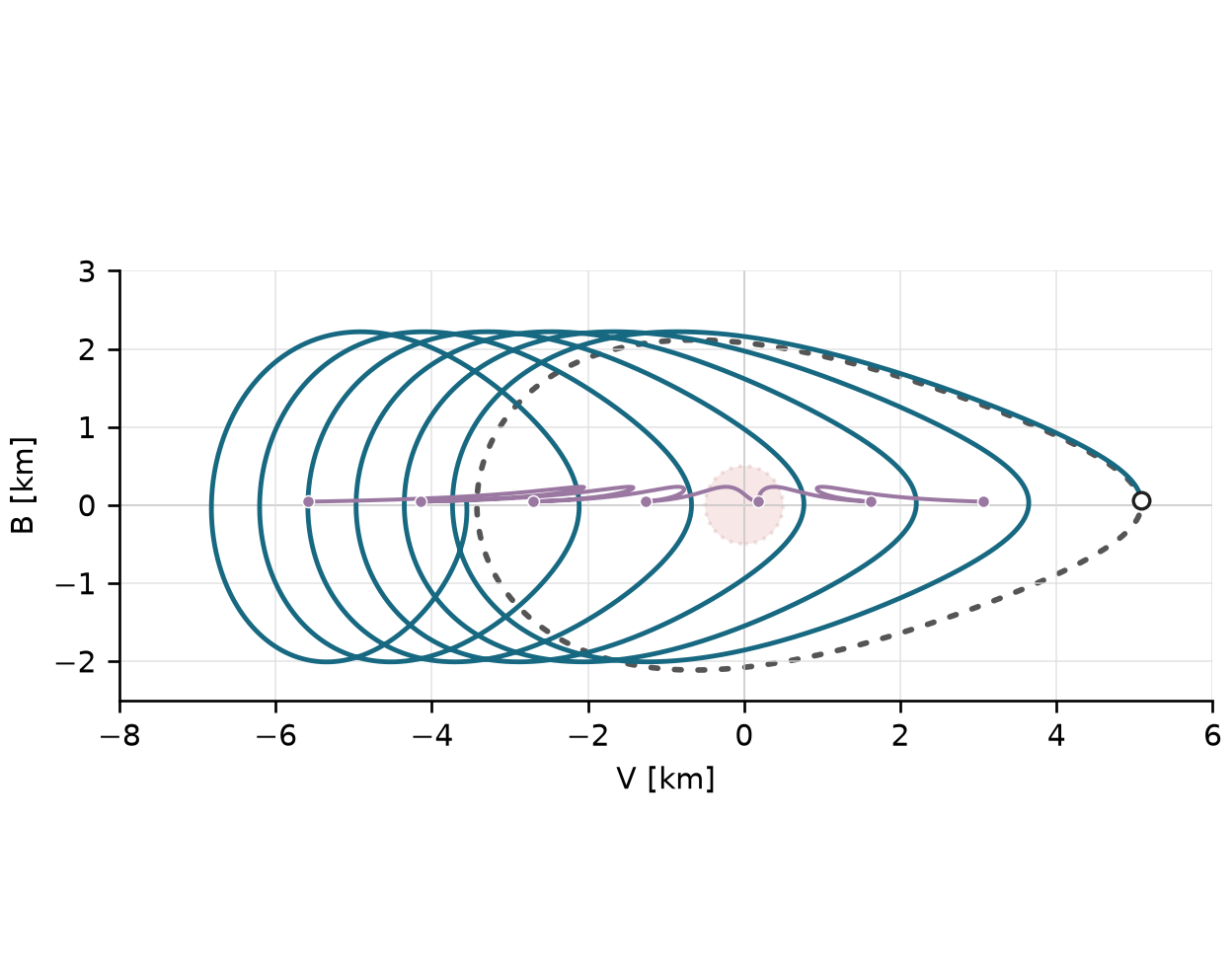}
\caption{Proposed formation, VB projection.}
\label{fig:passive-safety-e04-motion-vb}
\end{subfigure}\hfill
\begin{subfigure}[t]{0.49\textwidth}
\centering
\includegraphics[width=\linewidth]{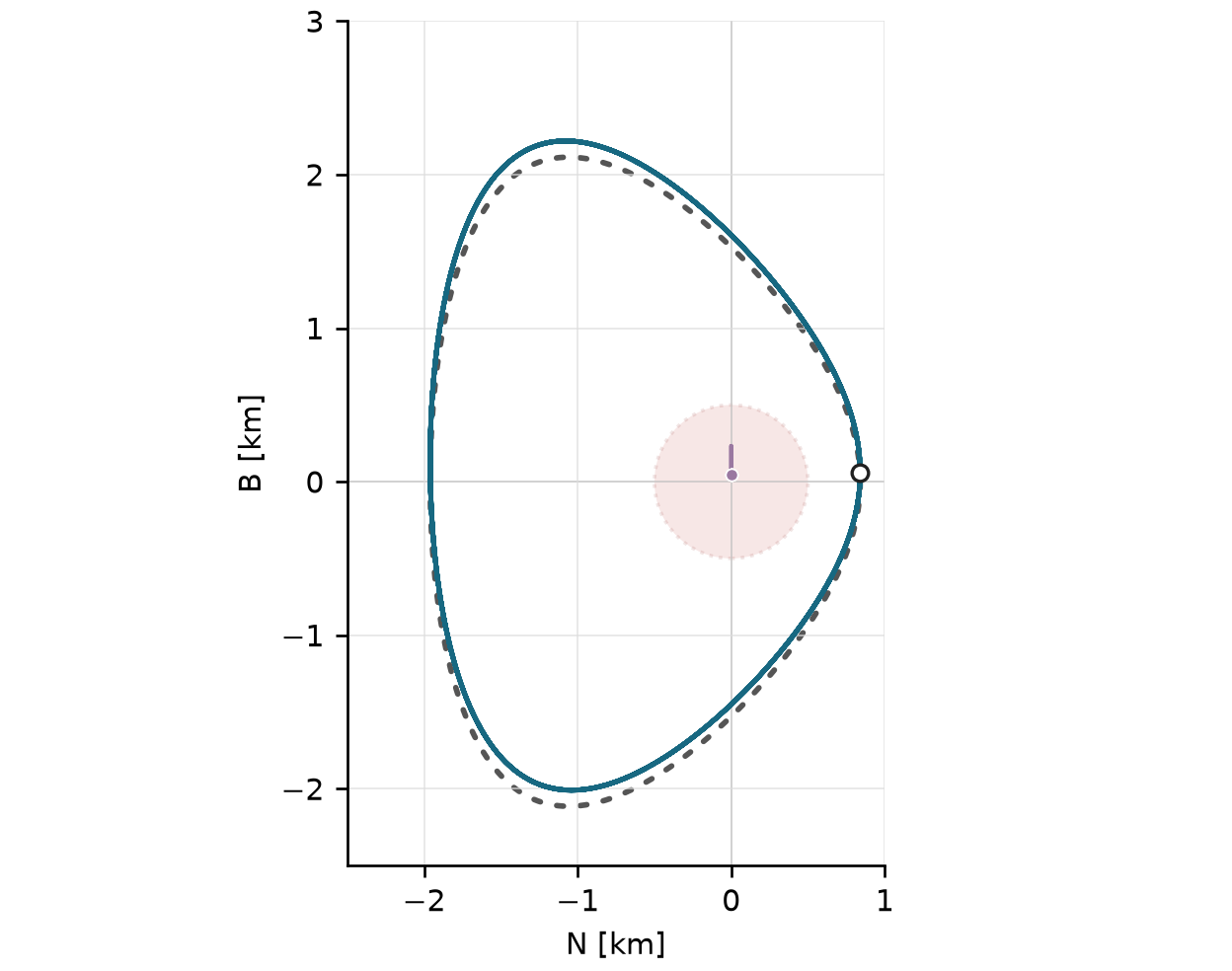}
\caption{Proposed formation, BN projection.}
\label{fig:passive-safety-e04-motion-bn}
\end{subfigure}
\caption{Relative-motion projections for the case of Fig.~\ref{fig:passive-safety-e04}. Solid curves include the \SI{100}{\metre} residual, and dotted curves show the nominal $\Delta a=0$ design. Purple curves are the first-order phase/energy centers, with dots at integer chief orbits. Shaded disks mark the keep-out zone, and open circles the initial state.}
\label{fig:passive-safety-e04-motion}
\end{figure}

Figure~\ref{fig:passive-safety-lambda-sweeps} tests sensitivity to the initial native longitude coordinate at all three eccentricities.
Each panel evaluates
$d_{\min}^{\mathrm{3D}}(L_0)=\min_{0\le t\le6T_c}\|\boldsymbol\rho(t)\|$
at \SI{100}{\metre} increments over $-\SI{5}{\kilo\metre}\le L_0\le\SI{10}{\kilo\metre}$, where $L_0=a\delta\lambda^{(D)}_0$ for the Delurgio--D'Amico formation and $L_0=a\delta\lambda_0$ for the proposed formation.
The upper limit exceeds the six-orbit longitude drift of both formations, including the larger drift of the Delurgio--D'Amico formation at $e=0.8$.
All nonlongitude elements remain fixed at the values of Table~\ref{tab:passive-safety-initial-roes}.
Because the two longitude coordinates have different definitions, equal abscissa values do not correspond to identical physical initial states.
The Delurgio--D'Amico formation enters the keep-out sphere over longitude intervals at every eccentricity, whereas the floor of the proposed formation is the bounded BN margin at every longitude.
The minima of these curves are the worst-case entries of Table~\ref{tab:passive-safety-minima}.

\begin{figure}[hbt!]
\centering
\begin{subfigure}[t]{0.8\textwidth}
\centering
\includegraphics[width=\linewidth]{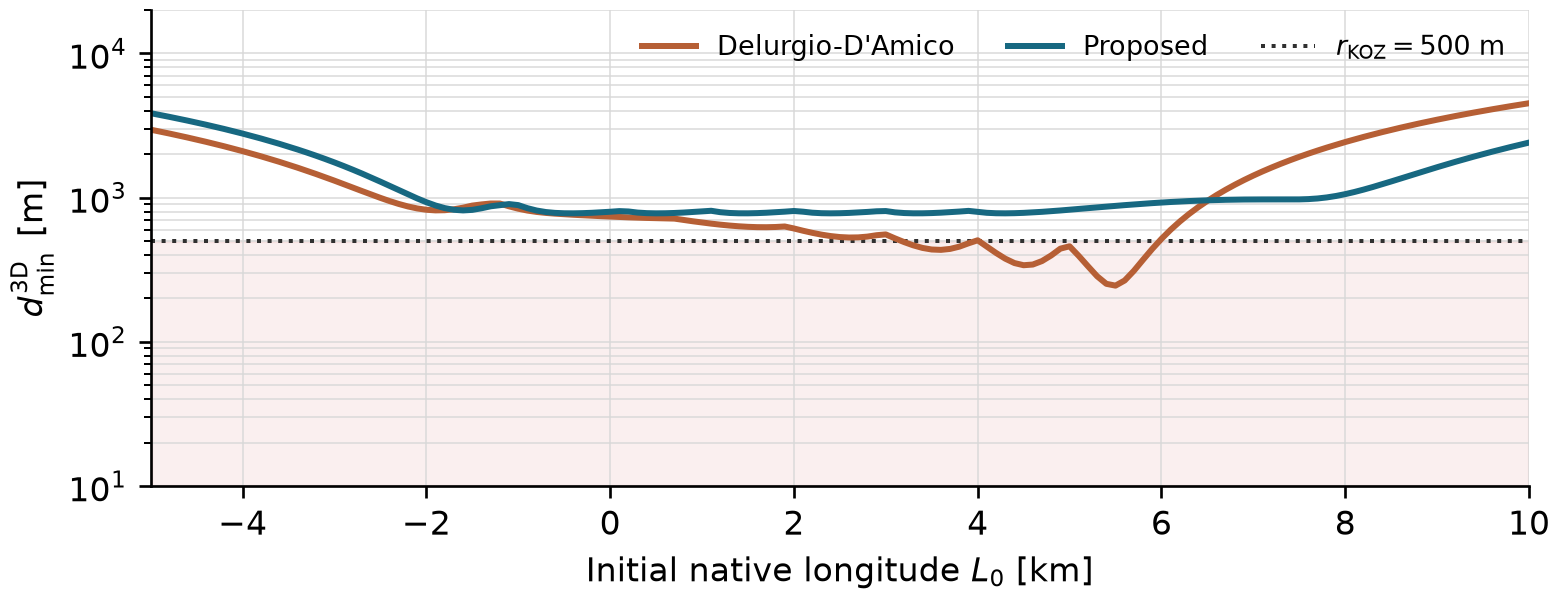}
\caption{$e=0.1$.}
\label{fig:passive-safety-lambda-e01}
\end{subfigure}
\par\medskip
\begin{subfigure}[t]{0.8\textwidth}
\centering
\includegraphics[width=\linewidth]{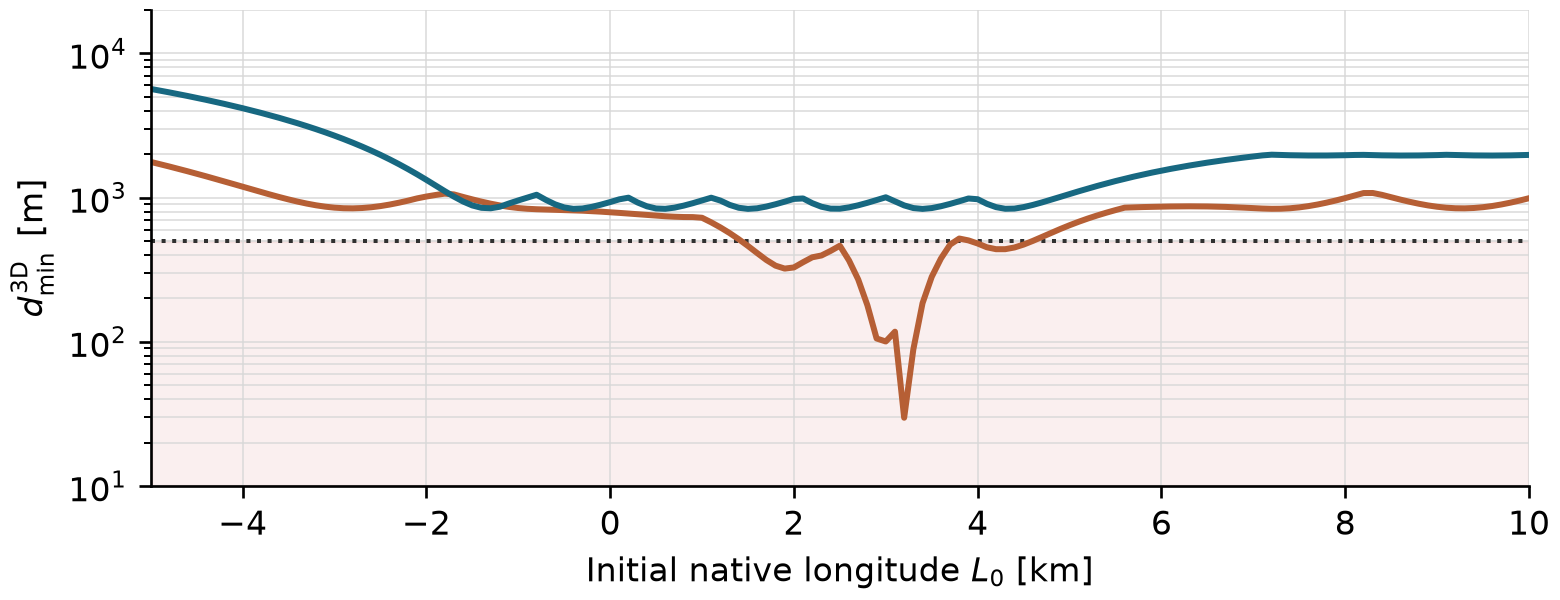}
\caption{$e=0.4$.}
\label{fig:passive-safety-lambda-e04}
\end{subfigure}
\par\medskip
\begin{subfigure}[t]{0.8\textwidth}
\centering
\includegraphics[width=\linewidth]{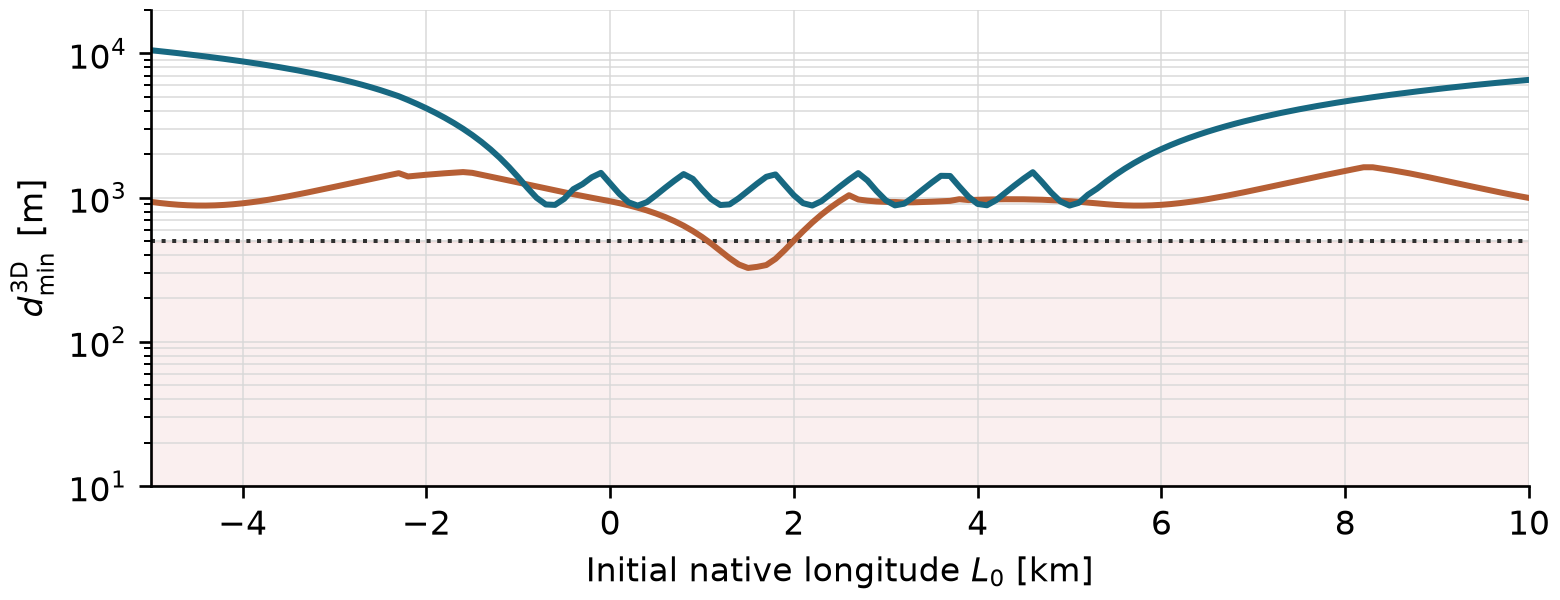}
\caption{$e=0.8$.}
\label{fig:passive-safety-lambda-e08}
\end{subfigure}
\caption{Six-orbit minimum three-dimensional separation versus the initial native longitude coordinate. Brown and blue curves show the Delurgio--D'Amico and proposed formations, respectively. The dotted line marks the \SI{500}{\metre} keep-out radius, with violations shaded below.}
\label{fig:passive-safety-lambda-sweeps}
\end{figure}

\subsection{Finite-Horizon STM Accuracy}
\label{sec:stm-accuracy}

This comparison does not seek higher first-order accuracy than the YA solution.
It tests whether the proposed element STMs, reconstructed through the VNB position map, reproduce the same first-order relative dynamics as the YA baseline.
Figure~\ref{fig:stm-nonlinear-error} reports the maximum position error over ten chief orbits as a function of chief eccentricity and initial longitude offset.
This error is defined by
\begin{equation}
 \epsilon_{\max}=\max_{0\leq t\leq 10T_c}
 \left\|\boldsymbol{\rho}_{\mathrm{STM}}(t)-
 \boldsymbol{\rho}_{\mathrm{ref}}(t)\right\|.
\end{equation}
The baseline deputy is initialized at $e=0.4$ from the proposed elements with $\Delta a=\SI{100}{\metre}$ and $a\delta\lambda_0=\SI{1}{\kilo\metre}$, and the remaining elements impose $\delta p=0$ and a \SI{700}{\metre} design clearance through Eq.~\eqref{eq:bn-bound}.
The eccentricity sweep varies $0.1\leq e\leq0.8$ at the fixed \SI{700}{\kilo\metre} perigee altitude of Sec.~\ref{sec:passive-safety-case-study}, holding the chief angular elements of that section and the baseline dimensional relative-element vector fixed.
The longitude sweep fixes $e=0.4$ and varies $0\leq a\delta\lambda_0\leq\SI{10}{\kilo\metre}$ with all other elements fixed.
At every sweep point, both formulations use the same physical chief and deputy.

For the reference, the chief and deputy mean elements are propagated independently with the secular $J_2$ rates of Eq.~\eqref{eq:j2-chief-rates}, mapped to Cartesian position, and differenced.
This reference is nonlinear in the finite chief--deputy element differences but shares the averaged-$J_2$ scope of the analytical STMs.
It excludes short-period terms and mean-to-osculating reconstruction.
Each STM prediction reconstructs position from the propagated ROE with its first-order map, Eq.~\eqref{eq:rtn-response} in RTN or Eq.~\eqref{eq:position} in VNB, and is rotated into inertial axes before the error is evaluated, so the comparison does not depend on the rotating-frame coordinates.
The YA STM curves use the Delurgio--D'Amico normalization of the YA integration constants~\cite{delurgio2024}.
The Sullivan--D'Amico reference-epoch EROE are first-order equivalent to the same constants~\cite{sullivan2017nkf}.
With consistent initialization and linear RTN reconstruction, their Keplerian prediction reproduces the YA response and is therefore not shown.

The two secular-$J_2$ STM error curves in Fig.~\ref{fig:stm-nonlinear-error} are nearly coincident in both sweeps.
Across the eccentricity sweep, their ten-orbit maxima grow from approximately \SI{5}{\metre} at low eccentricity to \SI{25}{\metre} at $e=0.8$.
The longitude sweep is nonmonotonic, because varying only $\delta\lambda_0$ allows finite-offset truncation terms associated with the fixed remaining elements to partially cancel.
Both formulations reach a minimum of approximately \SI{3}{\metre} near $a\delta\lambda_0=\SI{4}{\kilo\metre}$.
The close agreement is consistent with the two descriptions representing the same first-order relative dynamics, and their small residual difference reflects finite-offset truncation in the distinct first-order coordinate maps.
The two Keplerian STMs closely follow nonlinear Keplerian propagation, and all three have errors one to two orders of magnitude larger than those of the $J_2$ STMs because they omit differential $J_2$ drift.

\begin{figure}[hbt!]
\centering
\begin{subfigure}[t]{0.49\linewidth}
\centering
\includegraphics[width=\linewidth]{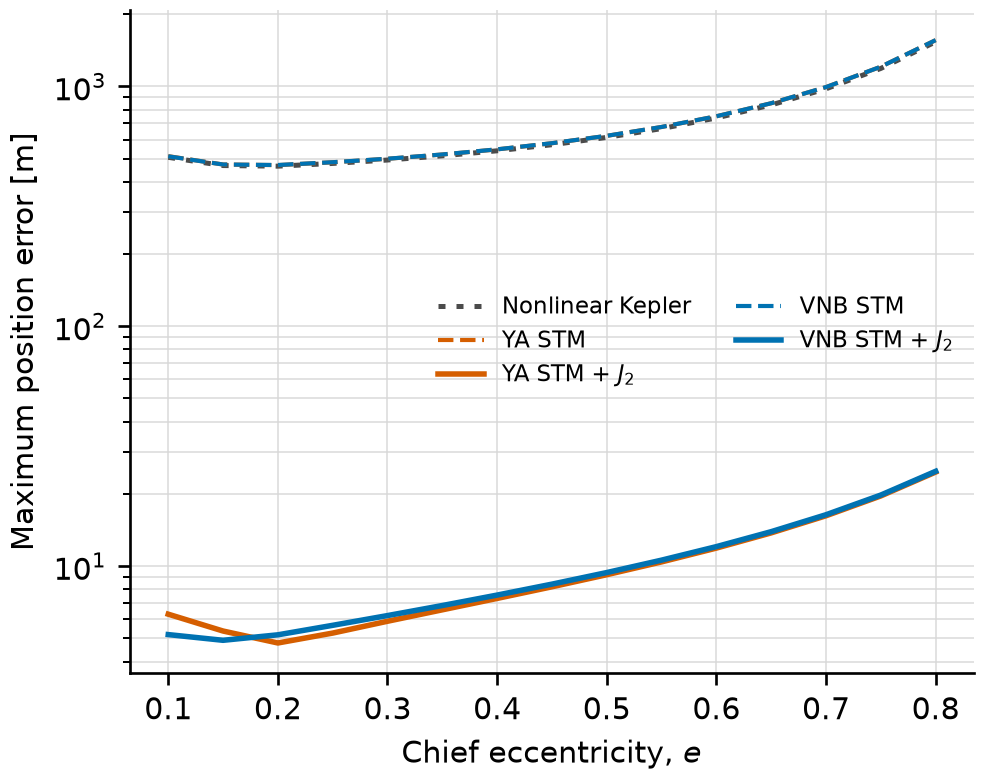}
\caption{Chief-eccentricity sweep, $a\delta\lambda_0=\SI{1}{\kilo\metre}$.}
\label{fig:stm-error-eccentricity}
\end{subfigure}\hfill
\begin{subfigure}[t]{0.49\linewidth}
\centering
\includegraphics[width=\linewidth]{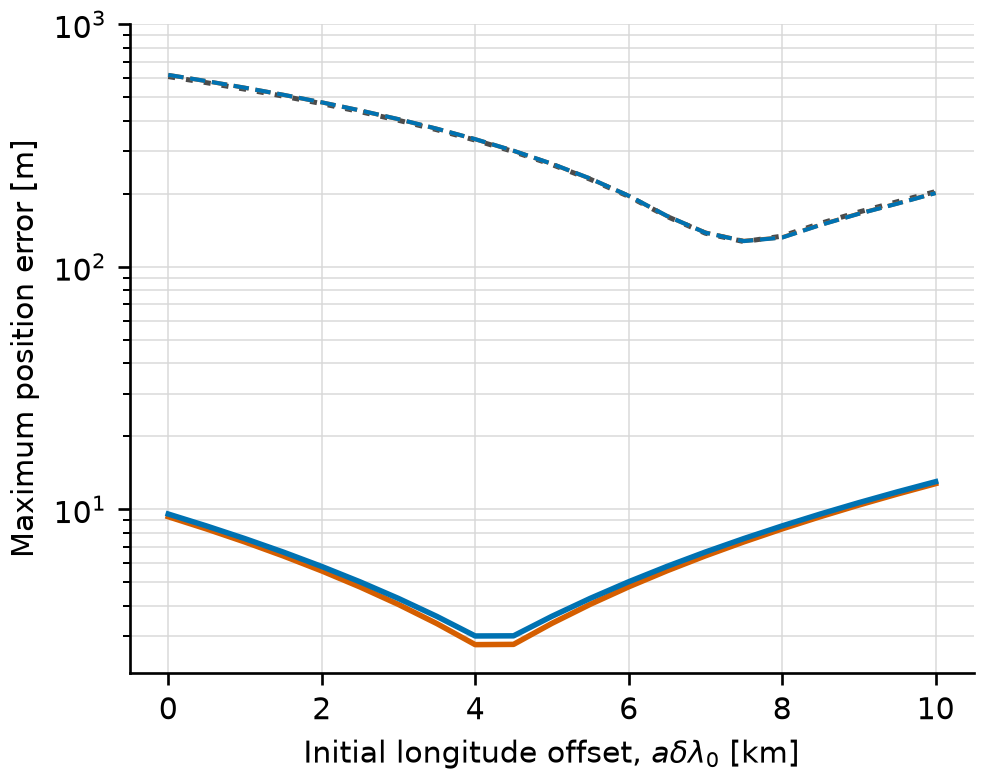}
\caption{Initial-longitude sweep, $e=0.4$.}
\label{fig:stm-error-longitude}
\end{subfigure}
\caption{Maximum position error over ten chief orbits relative to the nonlinear secular-mean-$J_2$ reference.}
\label{fig:stm-nonlinear-error}
\end{figure}

\section{Conclusion}
\label{sec:conclusions}

This paper presented a unified ROE framework for formation flying in eccentric orbits, built on a single qnsROE-like element set whose geometry is resolved in the VNB frame.
Because a first-order phase offset displaces the deputy along the chief velocity, this frame confines the secular phase drift to the velocity axis and leaves the BN geometry phase independent.
The proposed EROE are regular at $e=0$, reduce to the qnsROE in the circular limit, and map explicitly to other ROE sets and to the integration constants of the linearized relative dynamics, coinciding with the latter under a physical normalization of the Cartesian VNB solution.
With these elements, a closed-form BN-plane separation bound holds for arbitrary E/I-vector orientation, with a fixed clearance allowance for a semimajor-axis residual.
Closed-form Keplerian and secular-$J_2$ STMs and a decoupled impulsive control input matrix support propagation and control, and each reduces to its qnsROE counterpart at $e=0$.
For eccentric chiefs, numerical results confirmed the BN bound with the residual margin and showed formations sized by the RN-plane separation condition entering the keep-out zone.
Extending the Keplerian certificate to perturbed long-horizon motion and uncertainty dispersions, and building closed-form impulsive control on the decoupled input matrix, are natural directions for future work.

\appendix
\titleformat{\section}{\large\bfseries\singlespacing\centering}{Appendix~\thesection:\space}{0pt}{#1}[]

\section{Regularization at Zero Eccentricity}
\label{app:regularization}

At $e=0$, where $\omega$ is undefined, the classical-element form of Eq.~\eqref{eq:roe-definition} is singular in $\delta\lambda$ and both components of $\delta\boldsymbol e$.
An equivalent definition that is regular at $e=0$ uses the node-corrected eccentricity differences
\begin{equation}
\begin{aligned}
\Delta\epsilon_x&=\Delta e_x-e_y\cos i\,\Delta\Omega,\\
\Delta\epsilon_y&=\Delta e_y+e_x\cos i\,\Delta\Omega.
\end{aligned}
\end{equation}
In terms of these differences, the regular definition is
\begin{equation}
\begin{aligned}
\delta\lambda&=\delta\lambda^{(\mathrm{qns})}+
F(e^2)(e_x\Delta\epsilon_y-e_y\Delta\epsilon_x),\\
\delta e_x&=(\eta^2+2e_x^2)\Delta\epsilon_x
+2e_xe_y\Delta\epsilon_y-\eta^2 \dfrac{\Delta a}{a} e_x,\\
\delta e_y&=2e_x e_y\Delta\epsilon_x
+(\eta^2+2e_y^2)\Delta\epsilon_y-\eta^2 \dfrac{\Delta a}{a} e_y.
\end{aligned}
\end{equation}
The scalar factor $F$ is defined by
\begin{equation}
F(x)=-\frac{5-2x+x^2}
{(1+x)((1-x)^{3/2}+1+x)},\qquad F(0)=-\frac52.
\label{eq:phase-factor}
\end{equation}
The factor follows from
\begin{equation}
F(x)=\frac{1}{x}\left[\frac{(1-x)^{3/2}}{1+x}-1\right]
=\frac{(1-x)^{3/2}-(1+x)}{x(1+x)}.
\end{equation}
Rationalizing the numerator with $(1-x)^3-(1+x)^2=-x(5-2x+x^2)$ gives Eq.~\eqref{eq:phase-factor} and exposes the removable singularity.
Because $e_x\Delta\epsilon_y-e_y\Delta\epsilon_x=e^2 (\Delta\omega+\cos i\,\Delta\Omega)$ and $1+e^2F(e^2)=\eta^3/\chi$, the longitude agrees with Eq.~\eqref{eq:roe-definition}.
The transformation is regular for $0\le e<1$ and retains only the equatorial singularity of the QNS elements.
As $e\to0$, $F$ remains finite and $e_x,e_y\to0$, so the proposed EROE reduce to the qnsROE of Table~\ref{tab:roe_representation_comparison}.

The inverse is equally explicit.
The matrix $\eta^2\boldsymbol I_2+2\boldsymbol e\boldsymbol e^{\top}$ has eigenvalues $\chi$ along $\boldsymbol e$ and $\eta^2$ normal to it, so the inverse relations are
\begin{equation}
\Delta\boldsymbol\epsilon=\frac{1}{\eta^2}\left(\boldsymbol I_2-\frac{2}{\chi}\boldsymbol e\boldsymbol e^{\top}\right)
\left(\delta\boldsymbol e+\eta^2\delta a\,\boldsymbol e\right),
\qquad
\delta\lambda^{(\mathrm{qns})}=\delta\lambda-F(e^2)(e_x\Delta\epsilon_y-e_y\Delta\epsilon_x),
\label{eq:roe-inverse}
\end{equation}
where $\Delta\boldsymbol\epsilon=[\Delta\epsilon_x,\Delta\epsilon_y]^{\top}$.
The remaining element differences are $\Delta a/a=\delta a$, $\Delta i=\delta i_x/\eta^2$, and $\sin i\,\Delta\Omega=\delta i_y/\eta^2$.
The QNS eccentricity differences then follow as $\Delta e_x=\Delta\epsilon_x+e_y\cos i\,\Delta\Omega$ and $\Delta e_y=\Delta\epsilon_y-e_x\cos i\,\Delta\Omega$, so the conversion between the proposed EROE and the QNS element differences is regular in both directions for $0\le e<1$.

\section*{Acknowledgments}

The authors gratefully acknowledge the researchers who pioneered the concept of relative eccentricity/inclination-vector separation for geostationary satellite colocation and subsequently generalized it to near-circular formation flying.
Yuji Takubo thanks the Ezoe Memorial Recruit Foundation for the financial support, and Shane Lowe for insightful discussions.
Artificial intelligence tools were used to assist in the development of the proposed EROE framework, including algebraic derivations, code generation, and manuscript preparation.
The authors are fully responsible to what is presented in this paper.

\bibliography{reference}

@article{damico2012prisma,
  title   = {Spaceborne Autonomous Formation-Flying Experiment on the {PRISMA} Mission},
  author  = {D'Amico, Simone and Ardaens, Jean-Sebastien and Larsson, Robin},
  journal = {Journal of Guidance, Control, and Dynamics},
  volume  = {35},
  number  = {3},
  pages   = {834--850},
  year    = {2012},
  doi     = {10.2514/1.55638}
}

@article{krieger2007tandemx,
  title   = {{TanDEM-X}: A Satellite Formation for High-Resolution {SAR} Interferometry},
  author  = {Krieger, Gerhard and Moreira, Alberto and Fiedler, Hauke and Hajnsek, Irena and Werner, Marian and Younis, Marwan and Zink, Manfred},
  journal = {IEEE Transactions on Geoscience and Remote Sensing},
  volume  = {45},
  number  = {11},
  pages   = {3317--3341},
  year    = {2007},
  doi     = {10.1109/TGRS.2007.900693}
}

@article{montenbruck2006grace,
  title   = {{E/I}-Vector Separation for Safe Switching of the {GRACE} Formation},
  author  = {Montenbruck, Oliver and Kirschner, Michael and D'Amico, Simone and Bettadpur, Srinivas},
  journal = {Aerospace Science and Technology},
  volume  = {10},
  number  = {7},
  pages   = {628--635},
  year    = {2006},
  doi     = {10.1016/j.ast.2006.04.001}
}

@article{kruger2023starfox,
  title   = {Starling Formation-Flying Optical Experiment ({StarFOX}): System Design and Preflight Verification},
  author  = {Kruger, Justin and Koenig, Adam W. and D'Amico, Simone},
  journal = {Journal of Spacecraft and Rockets},
  volume  = {60},
  number  = {6},
  pages   = {1755--1777},
  year    = {2023},
  doi     = {10.2514/1.A35598}
}

@article{koenig2023visors,
  title   = {Formation Flying Orbit and Control Concept for the Virtual Super Optics Reconfigurable Swarm Mission},
  author  = {Koenig, Adam W. and D'Amico, Simone and Lightsey, E. Glenn},
  journal = {Journal of Guidance, Control, and Dynamics},
  volume  = {46},
  number  = {9},
  pages   = {1657--1670},
  year    = {2023},
  doi     = {10.2514/1.G007334}
}

@inproceedings{agarwal2021swarmex,
  title     = {Coordinating Development of the {SWARM-EX} {CubeSat} Swarm Across Multiple Institutions},
  author    = {Agarwal, Rahul and Oh, Bryce and Fitzpatrick, Daniel and Buynovskiy, Andrew and Lowe, Samuel and Lisy, Charles and Kriezis, Alexander and Lan, Brandon and Lee, Zachary and Thomas, Alan and Wallace, Bryan and Costantino, Elise and Miner, Grace and Thayer, Jeffrey and D'Amico, Simone and Lemmer, Kristina and Lohmeyer, Whitney and Palo, Scott},
  booktitle = {35th Annual Small Satellite Conference},
  address   = {Logan, UT},
  year      = {2021}
}

@article{burch2016mms,
  title   = {Magnetospheric Multiscale Overview and Science Objectives},
  author  = {Burch, J. L. and Moore, T. E. and Torbert, R. B. and Giles, B. L.},
  journal = {Space Science Reviews},
  volume  = {199},
  number  = {1--4},
  pages   = {5--21},
  year    = {2016},
  doi     = {10.1007/s11214-015-0164-9}
}

@inproceedings{plice2019helioswarm,
  title     = {{HelioSwarm}: Swarm Mission Design in High Altitude Orbit for Heliophysics},
  author    = {Plice, Laura and Dono Perez, Andres and West, Steven},
  booktitle = {AAS/AIAA Astrodynamics Specialist Conference},
  note      = {Paper AAS 19-831},
  address   = {Portland, ME},
  year      = {2019}
}

@inproceedings{lillis2020escapade,
  title     = {{ESCAPADE}: Coordinated Multipoint Measurements of {Mars}' Near-Space Plasma Environment},
  author    = {Lillis, Robert and Curry, Shannon and Russell, Christopher and Luhmann, Janet and Barjatya, Aroh and Larson, Davin and Modolo, Ronan and Livi, Roberto and Whittlesey, Phyllis and Harada, Yuki and Fowler, Christopher and Xu, Shaosui and Brain, David and Withers, Paul and Thiemann, Edward},
  booktitle = {EGU General Assembly},
  pages     = {{EGU2020-6354}},
  year      = {2020},
  doi       = {10.5194/egusphere-egu2020-6354}
}

@inproceedings{kolmas2016mdot,
  title     = {System Design of a Miniaturized Distributed Occulter/Telescope for Direct Imaging of Star Vicinity},
  author    = {Kolmas, Jan and Banazadeh, Payam and Koenig, Adam W. and Macintosh, Bruce and D'Amico, Simone},
  booktitle = {2016 IEEE Aerospace Conference},
  pages     = {1--11},
  year      = {2016},
  doi       = {10.1109/AERO.2016.7500783}
}

@article{llorente2013proba3,
  title   = {{PROBA-3}: Precise Formation Flying Demonstration Mission},
  author  = {Llorente, J. S. and Agenjo, A. and Carrascosa, C. and de Negueruela, C. and Mestreau-Garreau, A. and Cropp, A. and Santovincenzo, A.},
  journal = {Acta Astronautica},
  volume  = {82},
  number  = {1},
  pages   = {38--46},
  year    = {2013},
  doi     = {10.1016/j.actaastro.2012.05.029}
}

@article{clohessy1960,
  title   = {Terminal Guidance System for Satellite Rendezvous},
  author  = {Clohessy, W. H. and Wiltshire, R. S.},
  journal = {Journal of the Aerospace Sciences},
  volume  = {27},
  number  = {9},
  pages   = {653--658},
  year    = {1960},
  doi     = {10.2514/8.8704}
}

@article{tschauner1965,
  title   = {Rendezvous zu einem in elliptischer Bahn umlaufenden Ziel},
  author  = {Tschauner, J. and Hempel, P.},
  journal = {Astronautica Acta},
  volume  = {11},
  number  = {2},
  pages   = {104--109},
  year    = {1965}
}

@phdthesis{guffanti2022optimal,
  title   = {{Optimal Passively-Safe Control of Multi-Agent Motion with Application to Distributed Space Systems}},
  author  = {Guffanti, Tommaso},
  type    = {{Ph.D. Thesis}},
  school  = {Stanford Univ.},
  address = {Stanford, CA},
  year    = {2022},
  url     = {https://purl.stanford.edu/gh147jp5825}
}

@article{yamanaka2002,
  title   = {New State Transition Matrix for Relative Motion on an Arbitrary Elliptical Orbit},
  author  = {Yamanaka, Koji and Ankersen, Finn},
  journal = {Journal of Guidance, Control, and Dynamics},
  volume  = {25},
  number  = {1},
  pages   = {60--66},
  year    = {2002},
  doi     = {10.2514/2.4875}
}

@article{sullivan2017survey,
  title   = {Comprehensive Survey and Assessment of Spacecraft Relative Motion Dynamics Models},
  author  = {Sullivan, Joshua and Grimberg, Sebastian and D'Amico, Simone},
  journal = {Journal of Guidance, Control, and Dynamics},
  volume  = {40},
  number  = {8},
  pages   = {1837--1859},
  year    = {2017},
  doi     = {10.2514/1.G002309}
}

@book{alfriend2010,
  title     = {Spacecraft Formation Flying: Dynamics, Control and Navigation},
  author    = {Alfriend, Kyle T. and Vadali, Srinivas R. and Gurfil, Pini and How, Jonathan P. and Breger, Louis S.},
  publisher = {Elsevier},
  address   = {Oxford, UK},
  year      = {2010}
}

@article{casotto1993,
  title   = {Position and Velocity Perturbations in the Orbital Frame in Terms of Classical Element Perturbations},
  author  = {Casotto, Stefano},
  journal = {Celestial Mechanics and Dynamical Astronomy},
  volume  = {55},
  number  = {3},
  pages   = {209--221},
  year    = {1993},
  doi     = {10.1007/BF00692510}
}

@article{schaub2004,
  title   = {Relative Orbit Geometry Through Classical Orbit Element Differences},
  author  = {Schaub, Hanspeter},
  journal = {Journal of Guidance, Control, and Dynamics},
  volume  = {27},
  number  = {5},
  pages   = {839--848},
  year    = {2004},
  doi     = {10.2514/1.12595}
}

@article{gim2003,
  title   = {State Transition Matrix of Relative Motion for the Perturbed Noncircular Reference Orbit},
  author  = {Gim, Dong-Woo and Alfriend, Kyle T.},
  journal = {Journal of Guidance, Control, and Dynamics},
  volume  = {26},
  number  = {6},
  pages   = {956--971},
  year    = {2003},
  doi     = {10.2514/2.6924}
}

@article{damico2006ei,
  title   = {Proximity Operations of Formation-Flying Spacecraft Using an Eccentricity/Inclination Vector Separation},
  author  = {D'Amico, Simone and Montenbruck, Oliver},
  journal = {Journal of Guidance, Control, and Dynamics},
  volume  = {29},
  number  = {3},
  pages   = {554--563},
  year    = {2006},
  doi     = {10.2514/1.15114}
}

@phdthesis{damico2010thesis,
  title  = {Autonomous Formation Flying in Low {Earth} Orbit},
  author = {D'Amico, Simone},
  school = {Delft University of Technology},
  address = {Delft, The Netherlands},
  year   = {2010}
}

@article{sullivan2017nkf,
  title   = {Nonlinear {Kalman} Filtering for Improved Angles-Only Navigation Using Relative Orbital Elements},
  author  = {Sullivan, Joshua and D'Amico, Simone},
  journal = {Journal of Guidance, Control, and Dynamics},
  volume  = {40},
  number  = {9},
  pages   = {2183--2200},
  year    = {2017},
  doi     = {10.2514/1.G002719}
}

@phdthesis{willis2023thesis,
  title  = {Analytical Theory of Satellite Relative Motion with Applications to Autonomous Navigation and Control},
  author = {Willis, Matthew},
  school = {Stanford University},
  year   = {2023},
  url    = {https://purl.stanford.edu/kq677qp2544}
}

@article{koenig2017stm,
  title   = {New State Transition Matrices for Spacecraft Relative Motion in Perturbed Orbits},
  author  = {Koenig, Adam W. and Guffanti, Tommaso and D'Amico, Simone},
  journal = {Journal of Guidance, Control, and Dynamics},
  volume  = {40},
  number  = {7},
  pages   = {1749--1768},
  year    = {2017},
  doi     = {10.2514/1.G002409}
}

@article{guffanti2019srp,
  title   = {Linear Models for Spacecraft Relative Motion Perturbed by Solar Radiation Pressure},
  author  = {Guffanti, Tommaso and D'Amico, Simone},
  journal = {Journal of Guidance, Control, and Dynamics},
  volume  = {42},
  number  = {9},
  pages   = {1962--1981},
  year    = {2019},
  doi     = {10.2514/1.G002822}
}

@article{gaias2020analytical,
  title   = {Analytical Framework for Precise Relative Motion in Low {Earth} Orbits},
  author  = {Gaias, Gabriella and Colombo, Camilla and Lara, Martin},
  journal = {Journal of Guidance, Control, and Dynamics},
  volume  = {43},
  number  = {5},
  pages   = {915--927},
  year    = {2020},
  doi     = {10.2514/1.G004716}
}

@article{brouwer1959,
  title   = {Solution of the Problem of Artificial Satellite Theory Without Drag},
  author  = {Brouwer, Dirk},
  journal = {The Astronomical Journal},
  volume  = {64},
  year    = {1959},
  doi     = {10.1086/107958}
}

@article{chernick2018,
  title   = {New Closed-Form Solutions for Optimal Impulsive Control of Spacecraft Relative Motion},
  author  = {Chernick, Michelle and D'Amico, Simone},
  journal = {Journal of Guidance, Control, and Dynamics},
  volume  = {41},
  number  = {2},
  pages   = {301--319},
  year    = {2018},
  doi     = {10.2514/1.G002848}
}

@article{chernick2021,
  title   = {Closed-Form Optimal Impulsive Control of Spacecraft Formations Using Reachable Set Theory},
  author  = {Chernick, Michelle and D'Amico, Simone},
  journal = {Journal of Guidance, Control, and Dynamics},
  volume  = {44},
  number  = {1},
  pages   = {25--44},
  year    = {2021},
  doi     = {10.2514/1.G005218}
}

@inproceedings{delurgio2024,
  title     = {Closed-Form Modeling and Control of Spacecraft Swarms in Eccentric Orbits},
  author    = {Delurgio, Nicholas and D'Amico, Simone},
  booktitle = {2024 IEEE Aerospace Conference},
  pages     = {1--20},
  address   = {Big Sky, MT},
  year      = {2024},
  doi       = {10.1109/AERO58975.2024.10521046}
}

@article{albert2023,
  title   = {Relative Motion in the Velocity Frame for Atmospheric Entry Trajectories},
  author  = {Albert, Samuel W. and Schaub, Hanspeter},
  journal = {Journal of Spacecraft and Rockets},
  volume  = {60},
  number  = {5},
  pages   = {1614--1624},
  year    = {2023},
  doi     = {10.2514/1.A35753}
}

@article{peters2021cotangential,
  title   = {Linear Cotangential Transfers and Safe Orbits for Elliptic Orbit Rendezvous},
  author  = {Peters, Thomas V. and Noomen, Ron},
  journal = {Journal of Guidance, Control, and Dynamics},
  volume  = {44},
  number  = {4},
  pages   = {732--748},
  year    = {2021},
  doi     = {10.2514/1.G005152}
}

@article{buratti2026fsm,
  title   = {A Finite State Machine Guidance Architecture for Autonomous Rendezvous with Arbitrarily Elliptic Targets},
  author  = {Buratti, Diego and Gaias, Gabriella and Torresan, Stefano and Peters, Thomas V. and Roque, Pedro},
  journal = {Aerospace},
  volume  = {13},
  number  = {3},
  pages   = {230},
  year    = {2026},
  doi     = {10.3390/aerospace13030230}
}

@misc{d2005relative,
  title       = {Relative Orbital Elements as Integration Constants of {Hill}'s Equations},
  author      = {D'Amico, Simone},
  howpublished = {{DLR-GSOC TN 05-08}, Oberpfaffenhofen, Germany},
  month       = dec,
  year        = {2005}
}

@article{schaub2001j2,
  title={J2 invariant relative orbits for spacecraft formations},
  author={Schaub, Hanspeter and Alfriend, Kyle T},
  journal={Celestial Mechanics and Dynamical Astronomy},
  volume={79},
  number={2},
  pages={77--95},
  year={2001},
  doi={10.1023/A:1011161811472},
  publisher={Springer}
}

@book{vallado2001fundamentals,
  title={{Fundamentals of Astrodynamics and Applications}},
  author={Vallado, David A},
  edition={4},
  year={2013},
  publisher={Microcosm Press}
}

\end{document}